\numberwithin{equation}{section}
\newtheorem{theorem}[equation]{Theorem}
\newtheorem*{theorem*}{Theorem}
\newtheorem{lemma}[equation]{Lemma}
\newtheorem{proposition}[equation]{Proposition}
\newtheorem{corollary}[equation]{Corollary}
\theoremstyle{definition}
\newtheorem{example}[equation]{Example}
\newtheorem{remark}[equation]{Remark}
\theoremstyle{definition}
\DeclareMathOperator{\HH}{HH}
\DeclareMathOperator{\Hom}{Hom}
\newcommand{\Z}{\mathbb{Z}}
\newcommand{\op}{{\sf op}}
\newcommand{\ot}{\otimes}
\newcommand{\ev}{{\sf ev}}
\newcommand{\act}{\cdot}
\newcommand{\scl}{l}
\newcommand{\scr}{r}
\title[Hochschild Cohomology of Twisted Tensor Products]{Hochschild Cohomology\\ of Twisted Tensor Products}
\author{Benjamin Briggs}
\address{Department of Mathematics, University of Utah, Salt Lake City,
UT 84112} 
\email{briggs@math.utah.edu}
\author{Sarah Witherspoon}
\address{Department of Mathematics, Texas A\&M University, College Station,
TX 77843, USA}
\email{sjw@math.tamu.edu}
\thanks{The second author was partially supported by 
NSF grant DMS-1665286.}
\dedicatory{Dedicated to the memory of Ragnar-Olaf Buchweitz.}
\date{May 2, 2020}
\begin{document}

\maketitle

\begin{abstract}
For a tensor product of algebras twisted by a bicharacter, 
we completely describe its Hochschild cohomology, as a Gerstenhaber algebra,
in terms of the Hochschild cohomology of its component parts. This description generalizes a result of Bergh and Oppermann. It allows us to significantly simplify various calculations in the literature, and to compute Hochschild cohomology for a number of new examples.
\end{abstract}

\section{Introduction}

In this paper we consider the tensor product of two graded algebras, with multiplication twisted by a bicharacter. 
When the two factors are augmented, 
Bergh and Oppermann~\cite{BO} completely described 
the Ext algebra as a twisted tensor product of the Ext algebras of the factors. 
Hochschild cohomology, however, is more complicated, and with
the techniques at the time, they were able only to describe a subalgebra
of the Hochschild cohomology ring. 
This subalgebra may be thought of as the ``untwisted part'' of Hochschild cohomology;  
it was left open  how to describe the remaining, truly  twisted, part of
Hochschild cohomology. 
This we do here.
Moreover, we describe the full Gerstenhaber algebra structure
in terms of that of the component algebras.

To be precise, let $R$ and $S$ be algebras over a field $k$, graded by  abelian groups $A$ and $B$ respectively.
A bicharacter $t : A\times B\rightarrow k^{\times}$ determines a  twisted tensor product algebra
$R\ot^t S$, 
as explained in Section~\ref{sec:prelim} below.  The following result combines 
Theorems \ref{Main-theorem}, \ref{cup-prod-thm}, and \ref{thm:G-alg} (all notation is defined in Section~\ref{sec:prelim}).  

\begin{theorem*}
There is an isomorphism of Gerstenhaber algebras
\[
   \HH^*(R\ot^t S) \cong \bigoplus_{a\in A, \ b\in B}
     \HH^*(R, R_{\hat{b}})^a \ot \HH^*(S, {}_{\hat{a}}S)^b .
\]
Let $M$ be an $R$-bimodule graded by $A$ and let $N$ be an $S$-bimodule
graded by $B$.
There is an isomorphism of graded  $\HH^*(R\ot^t S)$-modules
\[
\HH^*(R\ot^t S,M\ot^tN)\cong \bigoplus_{a\in A, \ b\in B}\HH^*(R,M_{\hat{b}})^a\ot \HH^*(S,{}_{\hat{a}}N)^b.
\]
\end{theorem*}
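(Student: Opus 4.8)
The plan is to reuse the projective bimodule resolution of $R\ot^t S$ underlying the algebra isomorphism, now computing cohomology with coefficients in $M\ot^t N$. Concretely, I would fix projective resolutions $P_\bu\to R$ over $R^e$ and $Q_\bu\to S$ over $S^e$ respecting the gradings by $A$ and $B$, and form the twisted total complex $P_\bu\ot^t Q_\bu$. The structural input, which the algebra case already supplies, is that $(R\ot^t S)^e$ is itself a twisted tensor product assembled from $R^e$ and $S^e$ via a bicharacter built from $t$; granting this, $P_\bu\ot^t Q_\bu$ is a complex of projective $(R\ot^t S)^e$-modules resolving $R\ot^t S$, with acyclicity inherited from the contracting homotopies on $P_\bu$ and $Q_\bu$.

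The next step is to evaluate $\Hom_{(R\ot^t S)^e}(P_\bu\ot^t Q_\bu,\,M\ot^t N)$. Using tensor--hom adjunction together with the twisted decomposition of the enveloping algebra, this Hom complex should split as a direct sum over $a\in A$ and $b\in B$ of tensor products
\[
\Hom_{R^e}(P_\bu,M_{\hat b})^a\ot\Hom_{S^e}(Q_\bu,{}_{\hat a}N)^b .
\]
The technical heart of the argument lies exactly here: commuting the $S$-homogeneous part of degree $b$ past the coefficient module $M$ is what twists the right $R$-action to produce $M_{\hat b}$, and symmetrically the $R$-homogeneous part of degree $a$ twists $N$ to ${}_{\hat a}N$, so the bicharacter bookkeeping must be carried out carefully to pin down the correct twisted coefficients.

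Finally I would pass to cohomology: since $k$ is a field the Künneth theorem identifies the cohomology of each tensor factor of complexes of $k$-vector spaces with $\HH^*(R,M_{\hat b})^a\ot\HH^*(S,{}_{\hat a}N)^b$, and summing over $a$ and $b$ gives the asserted graded vector-space isomorphism. To upgrade this to an isomorphism of $\HH^*(R\ot^t S)$-modules, I must track the action at the chain level: it is realized by cup product along the algebra resolution $P_\bu\ot^t Q_\bu$, and I would check that under the decomposition it matches the componentwise cup-product actions of the summands $\HH^*(R,R_{\hat b})^a\ot\HH^*(S,{}_{\hat a}S)^b$, up to the signs and twists dictated by $t$. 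I expect the main obstacle to be precisely this compatibility, namely ensuring that the Koszul signs and the bicharacter twists in the module action agree on both sides; the underlying vector-space isomorphism is a routine Künneth computation once the twisted coefficients are correctly identified.
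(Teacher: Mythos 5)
Your resolution-level argument is essentially the paper's own route to the \emph{second} display, with arbitrary graded projective resolutions in place of the bar resolutions the paper uses: Proposition \ref{diff-prop} exhibits $(R\ot^t S)\ot BR\ot BS\ot(R\ot^t S)$ as a bimodule resolution, and the proof of Theorem \ref{vect-space-theorem} checks by hand that $f\ot g\mapsto f\boxtimes g$ is a chain isomorphism onto $C^*_t(R,S,M,N)$; your identification of where the twisted coefficients $M_{\hat b}$ and ${}_{\hat a}N$ come from (graded pieces passing the coefficients) is exactly right. One caveat: the splitting
\[
\Hom_{(R\ot^t S)^{\ev}}\bigl(P_\bu\ot^t Q_\bu,\,M\ot^t N\bigr)\cong\bigoplus_{a,b}\Hom_{R^{\ev}}(P_\bu,M_{\hat b})^a\ot\Hom_{S^{\ev}}(Q_\bu,{}_{\hat a}N)^b
\]
is simply false without finiteness hypotheses, since $\Hom(X,M)\ot\Hom(Y,N)\to\Hom(X\ot Y,M\ot N)$ need not be an isomorphism; the paper imposes at the start of Section \ref{sec:main} that each $R^a$ is finite dimensional and that the supports have finite-fiber addition, precisely so the bar resolutions are degreewise finite dimensional. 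Your proposal states no such hypothesis and should.

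The genuine gap is the first display. The theorem asserts an isomorphism of \emph{Gerstenhaber algebras}, and your proposal neither defines the Gerstenhaber structure on the right-hand side nor says a word about the bracket. This structure is not the naive (even Koszul-signed) tensor product of the two Hochschild cohomologies: the paper must first introduce twisted cup products $\smile_t$ and twisted circle products $\circ_t$ on the orbit Hochschild cohomologies $\bigoplus_b\HH^*(R,R_{\hat b})$ and $\bigoplus_a\HH^*(S,{}_{\hat a}S)$, prove twisted commutativity (Propositions \ref{R-twisted-comm-prop} and \ref{S-twisted-comm-prop}), and then combine the factors Manin-style with the untwisting coefficient $(-1)^{m'n}t(a',b)^{-1}$ of formula (\ref{untwistedcupdef}) and the bracket formula of Theorem \ref{thm:G-alg}. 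Without these twisted operations the claimed isomorphism is not merely unverified but wrong in substance: for instance the degeneracy $x\smile x'=0$ unless $t(a',b)=t(a,b')^{-1}$ (Corollary \ref{degenerate-cor}) is forced by the twisted commutativity of the factors and would be invisible on a naively structured right-hand side. Moreover, your deferred check of ``signs and twists'' in the module action cannot even be set up over arbitrary $P_\bu,Q_\bu$: comparing products requires a diagonal approximation on $P_\bu\ot^t Q_\bu$, and comparing brackets requires further homotopy data. The paper works with bar resolutions precisely because they carry the explicit diagonal $\Delta$ of (\ref{eqn:diagonal}) and the homotopy $\Phi$ of \cite{GNW,NW} used in the circle-product computation proving Theorem \ref{thm:G-alg}, which is the bulk of the paper's proof and is absent from your outline.
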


 This description allows us to simplify drastically the calculation of Hochschild cohomology for several classes of algebras, notably the quantum complete intersections (see Section~\ref{sec:examples}). The fact that this decomposition is compatible with the cup product implies that the Hochschild cohomology of twisted tensor products often has an extremely degenerate product (see Corollary \ref{degenerate-cor}). The fact that it is compatible with the Gerstenhaber bracket means it can be used to simplify the often formidable task of computing brackets.

On the two factors appearing in the theorem we define a twisted cup product $\smile_t$ and twisted Gerstenhaber bracket $[\ ,\,]_t$ (coming from a chain level twisted circle product $\circ_t$). These structures seem interesting in their own right, and we only begin to study them here. The rule for combining these twisted structures together, in the two factors appearing in the main theorem, is a generalization of Manin's definition of the tensor product of two Gerstenhaber algebras~\cite{M}.

The proof is elementary, and we give an explicit isomorphism at the level of Hochschild cochain complexes, taking care to match up the various twistings.  This means in particular that the main theorem could be upgraded into a statement about dg algebras. At the same time, we indicate how the isomorphism can be thought of conceptually in terms of ``orbit Hochschild cohomology'' (see Section~\ref{sec:cup}).

\subsection*{Outline} Section \ref{sec:prelim} contains the necessary notation and homological notions. 
The main isomorphism is constructed in Section~\ref{sec:main}, at the level of graded vector spaces. 
In Section~\ref{sec:cup} we handle the cup product and module structure, and then in Section~\ref{sec:bracket} we treat Gerstenhaber brackets, completing the proof of the above theorem.
Finally, in Sections~\ref{sec:examples} and~\ref{sec:examples2} we give examples. After understanding the statement of the main theorem, we recommend that the reader skip to the example sections to see how it is used.

\section{Preliminaries}\label{sec:prelim}

Throughout this paper $k$ is a field (but a commutative ring would be fine if everything in sight is projective over $k$). All unlabeled tensor products and $\Hom$s are taken over $k$. We denote by $R^\ev = R^\op\ot R$ the enveloping algebra of a $k$-algebra $R$.

\subsection*{Twisted tensor products of algebras}

First we recall how the tensor product of two graded algebras can be twisted by a bicharacter connecting their grading groups. 

Let $R$ and $S$ be $k$-algebras that are
graded by abelian groups $A$ and $B$:
\[
   R = \bigoplus_{a\in A} R^a \quad \mbox{ and }
   \quad S = \bigoplus_{b\in B} S^b .
\]
Suppose also that $t: A\times B \rightarrow k^{\times}$ is a bicharacter, so that
\[
  t(a+a',b) = t(a,b)t(a',b), \quad 
   \quad t(a, b+b') = t(a,b)t(a,b'),
\]
whenever $a,a'\in A$ and $b,b'\in B$. For convenience we also use the notation
\[ t(r,s)=t(a,b) \] 
when $r$ is in $R^a$ and $s$ is in $S^b$,
and similar notation for elements in graded modules.

With this data the {\em twisted tensor product} $R\ot^t S$ is by definition the vector space $R\ot S$, with multiplication given by
\[
   (r\ot s )\cdot( r'\ot s') = t(r',s) \,rr'\ot ss'
\]
for homogeneous elements $r,r'\in R$ and $s,s'\in S$.

Note that $R\ot^t S$ is naturally an $A\oplus B$-graded algebra with $(R\ot^t S)^{a,b} = R^a\ot S^b$ for all $a\in A$ and $b\in B$.

\begin{example}\label{exKoszul}
If $A$ and $B$ are both $\Z$ and $t$ is the sign bicharacter $t(a,b)=(-1)^{ab}$, then this construction yields the usual graded tensor product of two graded algebras (i.e.~following the Koszul sign rule).
\end{example}

\begin{example}\label{exqci}
Let $R = k[x]/(x^m)$ and $S=k[y]/(y^n)$ for some positive integers $m,n\geq 2$,
both graded by $\Z$, with $x$ and $y$ in degree $1$.
Let $q\in k^{\times}$ and $t(a,b) =q^{ab}$ for $a,b\in \Z$.
There is a presentation
\[
   R\ot ^t S \cong k\langle x,y\rangle / ( x^m , \ y^n , \ yx-qxy ) .
\]
This is a {\em quantum complete intersection} in two indeterminates.
The construction can be iterated to obtain a quantum complete
intersection in finitely many indeterminates.
When $m=n=2$ and $q$ is not a root of unity,
these are algebras of infinite global dimension whose Hochschild cohomology
is finite dimensional over $k$, as  discovered by
Buchweitz, Green, Madsen, and Solberg~\cite{BGMS}. 
\end{example} 

We will present more examples in the final two sections.

\subsection*{Twisted tensor products of bimodules} 

Continuing in the setting of the last subsection, suppose that $M$ is an $A$-graded $R$-bimodule, and that $N$ is a $B$-graded $S$-bimodule.

The twisted tensor product $M\ot^t N$ is by definition the $A\oplus B$-graded $R\otimes^t S $-bimodule whose underlying graded vector space is $M\otimes N$, with $R\otimes^t S$-action
\begin{equation}\label{bimodule-tens}
(r\otimes s)\cdot (m\otimes n) =  t(m,s)\,rm \otimes sn
\quad \mbox{ and }
   \quad
(m\otimes n)\cdot (r\otimes s) = t(r,n)\,mr \otimes ns
\end{equation}
for homogeneous elements $r\in R$, $s\in S$, $m\in M$ and $n\in N$.

One may consider $R\ot^tS$ as a bimodule over itself in the usual way, and this notation is consistent in that $R\ot^tS$ is indeed the bimodule twisted tensor product of the bimodules $R$ and $S$.

\subsection*{Group actions}

Let $\widehat{A}=\Hom(A,k^\times)$ be the group of linear characters of $A$. 
Since $R$ is graded by $A$, this group acts naturally on $R$ by setting $\rho\cdot r = \rho(a) r$ for $r\in R^a$ and $\rho\in \widehat{A}$.

The bicharacter $t$ induces a homomorphism $B\to \widehat{A}$, which will be denoted $b\mapsto \hat{b}$, and through this homomorphism $B$ acts naturally on $R$.
Explicitly, the automorphism $\hat{b}$ of $R$ is defined by 
\begin{equation}\label{action-def}
\hat{b}(r) = t(a,b) r \quad \text{for all} \quad r\in R^a. 
\end{equation}
Similarly, through the homomorphism $A\to \widehat{B}$, denoted $a\mapsto \hat{a}$, we obtain from $t$ a natural action of $A$ on $S$. 
These actions allow us to twist the structures of $R$-bimodules
and $S$-bimodules as we describe next.

\subsection*{Twisting module structures along automorphisms}

This is a separate (but, we will see, related) use of the word ``twist''. 

Suppose that $\rho$ is a graded $k$-algebra automorphism of $R$, and that $M$ is a graded  $R$-bimodule. We denote by $M_\rho$ the graded  $R$-bimodule obtained from $M$ by twisting the right $R$-module structure along $\rho$. That is to say, the left action of $R$ is unchanged and the right action is the composition
\[
M_\rho\otimes R\xrightarrow{1\otimes \rho} M\otimes R \xrightarrow{\ \mu \ } M,
\]
where $\mu$ is the given right module structure of $M$. In calculations we will need to distinguish the old and new actions of $R$, so we will use $\act_\rho$ to denote the twisted action and plain concatenation for the original action:
\begin{equation}\label{action-convention}
    m\act_\rho r= m\rho(r).
\end{equation}
Similarly, one may twist the left module structure along $\rho$ to obtain a bimodule ${}_\rho M$.

In particular, any $R$-bimodule $M$ can be twisted by an element of $B$ to produce a new bimodule $M_{\hat{b}}$. And any $S$-bimodule $N$ can be twisted by an element of $A$ to produce a new bimodule ${}_{\hat{a}}N$. It is these twists which appear in the main theorem. 

\subsection*{The bar 
construction and Hochschild cohomology}

Mostly in order to fix notation, we quickly recap the usual construction of the Hochschild cochain complex, and some of the structure that it enjoys.

Denote by 
$BR$ the unreduced \emph{bar construction}, which is a $\Z$-graded vector space with $B_m R = R^{\otimes m}$. We use the bar notation $r_1\otimes \cdots \otimes r_m=[r_1|\cdots |r_m]$.
The bar construction comes with a differential $b_R\colon B_nR\to B_{n-1}R$ given by $b_R[r_1|\cdots |r_m]=\sum_{i=1}^{m-1}(-1)^i [r_1|\cdots | r_{i}r_{i+1}|\cdots |r_m]$. 

The bar construction can be used to build resolutions. In particular, the \emph{bar resolution} of $R$ is by definition the complex $R\ot BR\ot R$ with the $R$-bilinear differential given by $\partial( 1\ot [r_1|\cdots |r_{m}]\ot 1)=$
\[
r_1\ot [r_2|\cdots |r_{m}]\ot 1 + 1\ot b_R[r_1|\cdots |r_{m}] \ot 1+ (-1)^{m} 1\ot [r_1|\cdots |r_{m-1}]\ot r_{m}.
\]
Using this differential we get a free resolution $R\ot BR\ot R\xrightarrow{\ \simeq\ } R$ of $R$-bimodules. In Section \ref{sec:bracket} we will use the short-hand $B(R)=R\ot BR\ot R$ for the bar resolution.

If $M$ is an $R$-bimodule, 
the unreduced Hochschild cochain complex  $C^*(R,M)$ is by definition $\Hom(BR,M)$. 
Its differential is inherited from the bar resolution by way of the isomorphism $\Hom(BR,M)\cong \Hom_{R^\ev}(R\ot BR\ot R,M)$. Explicitly, if $f\in C^m(R,M)$ then $\partial(f) [r_1|\cdots |r_{m+1}]=$
\begin{equation}\label{HHdiff}
 r_1f[r_2|\cdots |r_{m+1}] + fb_R[r_1|\cdots |r_{m+1}] + (-1)^{m+1}f[r_1|\cdots |r_m]r_{m+1}
\end{equation}
for $r_1,\ldots, r_{m+1}$ in $R$. 
The homology of $C^*(R,M)$ is the Hochschild cohomology $\HH^*(R,M)$ of $R$ with coefficients in $M$.

From the $A$-grading of $R$ and $M$, each of $BR$ and $C^*(R,M)$  and $\HH^*(R,M)$ inherit an $A$-grading. 

All of the above applies just as well to $S$ with its $B$-grading, and indeed $R\ot^tS$ with its $A\oplus B$-grading.

\subsection*{The twisted bar resolution}
One may tensor together the bar resolutions of $R$ and $S$, with action
as in equation~(\ref{bimodule-tens}), to obtain (by the K\"unneth Theorem) a quasi-isomorphism
\[
(R\ot BR\ot R)\ot^t(S\ot BS\ot S)\xrightarrow{\ \simeq\ } R\ot^t S.
\]
The left-hand side has its usual tensor product differential here; the twist only affects the bimodule structure.

Now, there is a unique $R\ot^t S$-bimodule isomorphism
\[
(R\ot BR\ot R)\ot^t(S\ot BS\ot S) \cong (R\ot ^t S)\ot BR\ot BS\ot (R\ot^t S)
\]
such that 
\[
(1 \ot [r_1|\cdots |r_m]\ot 1)\ot (1\ot [s_1| \cdots | s_n ]\ot 1) \ \mapsto\  (1\ot 1)\ot [r_1|\cdots |r_m]\ot [s_1| \cdots | s_n ]\ot (1\ot 1) .
\]
If we write out in full what happens to the differential under this isomorphism we obtain the following proposition.

\begin{proposition}\label{diff-prop}
$(R\ot ^t S)\ot BR\ot BS\ot (R\ot^t S)$ is naturally an $R\ot^t S $-bimodule resolution of $R\ot^t S $ when equipped with the following differential:

\[
 \partial\ \left((1\ot 1)\ot [r_1|\cdots |r_m]\ot [s_1| \cdots | s_n ]\ot (1\ot 1)
  \right) = 
 \]
 \begin{align*}
&   ( r_1\ot 1)\ot [r_2|\cdots |r_m]\ot [s_1| \cdots | s_n ]\ot (1\ot 1) +\\ 
  &(1\ot 1)\ot b_R[r_1|\cdots |r_m]\ot [s_1| \cdots | s_n ]\ot (1\ot 1) +\\
  (-1)^m t(r_m, b)^{-1}&( 1\ot 1)\ot [r_1|\cdots |r_{m-1}]\ot [s_1| \cdots | s_n ]\ot (r_m\ot 1) + \\
(-1)^mt(a, s_1)^{-1}&( 1\ot s_1)\ot [r_1|\cdots |r_m]\ot [s_2| \cdots | s_n ]\ot (1\ot 1) +\\ 
   (-1)^m&(1\ot 1)\ot [r_1|\cdots |r_m]\ot b_S[s_1|\cdots | s_n ]\ot (1\ot 1) +\\
 (-1)^{m+n} &(1\ot 1)\ot [r_1|\cdots |r_{m}]\ot [s_1| \cdots | s_{n-1} ]\ot (1\ot s_n).\\
\end{align*}
Here $a$ is the $A$-degree of $[r_1|\cdots |r_m]$ and $b$ is the $B$-degree of $[s_1| \cdots | s_n ]$.
\end{proposition}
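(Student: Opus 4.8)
The plan is to transport the standard tensor-product differential on $(R\ot BR\ot R)\ot^t(S\ot BS\ot S)$ across the stated $R\ot^t S$-bimodule isomorphism, term by term. That the resulting complex is a resolution of $R\ot^t S$ requires no separate argument: it is already recorded above that the source is a resolution (via the K\"unneth theorem) and that the displayed map is an isomorphism of $R\ot^t S$-bimodules, so both the resolution property and the bimodule structure pass to the target. The only real content is therefore the computation of the image of the differential.

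First I would write the tensor-product differential explicitly as $\partial_R\ot 1 + (-1)^m\,1\ot\partial_S$ applied to a generator $(1\ot [r_1|\cdots|r_m]\ot 1)\ot(1\ot [s_1|\cdots|s_n]\ot 1)$, where $m$ is the homological degree of the $BR$-factor and $\partial_R,\partial_S$ are the two bar-resolution differentials recalled above. Expanding both produces six kinds of terms: two \emph{inner} terms coming from $b_R$ and $b_S$, which pass through the isomorphism untouched, and four \emph{outer} terms in which a single algebra element ($r_1$ or $r_m$ on the $R$-side, $s_1$ or $s_n$ on the $S$-side) has been attached at the left or right edge of a bar word.

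The key step is to rewrite each outer term so that the edge element becomes part of the left or right $R\ot^t S$-action, after which the bilinear isomorphism applies directly. This is exactly where the bicharacter factors enter, through the bimodule structure~(\ref{bimodule-tens}). Pulling $r_m$ out to the right as $(r_m\ot 1)$ forces a factor $t(r_m,b)^{-1}$, since the right action carries the twist $t(r,n)$ and the $BS$-word has $B$-degree $b$; symmetrically, pulling $s_1$ out to the left as $(1\ot s_1)$ forces a factor $t(a,s_1)^{-1}$, since the left action carries $t(m,s)$ and the $BR$-word has $A$-degree $a$. By contrast, the edge elements $r_1$ (extracted to the left) and $s_n$ (extracted to the right) are paired against the identity of the opposite factor, whose grading degree is $0$, so the bicharacter evaluates to $1$ and no twist appears. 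Reproducing this asymmetry correctly -- $r_m$ and $s_1$ twisted, $r_1$ and $s_n$ untwisted -- is the main obstacle: the manipulation is otherwise routine, but one must pair each extracted element against the correct grading degree and check that the factor picked up is precisely the inverse of the twist built into the action in~(\ref{bimodule-tens}).

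Finally I would collect the signs. The $r_1$ and $b_R$ terms carry sign $+1$, while the $r_m$ term carries $(-1)^m$ from $\partial_R$; all three $S$-side terms acquire the Koszul sign $(-1)^m$, so that $s_1$ and $b_S$ carry $(-1)^m$, and $s_n$, with its additional internal sign $(-1)^n$ from $\partial_S$, carries $(-1)^{m+n}$. Assembling the six surviving terms with these signs and twists yields exactly the stated formula.
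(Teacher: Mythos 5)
Your proposal is correct and follows exactly the route the paper takes (the paper merely remarks that ``writing out in full what happens to the differential under this isomorphism'' yields the proposition): transport $\partial_R\ot 1+(-1)^m\,1\ot\partial_S$ across the bilinear isomorphism, with the resolution property inherited from the K\"unneth quasi-isomorphism. Your accounting of the twists is also right, including the asymmetry that only the $r_m$ and $s_1$ terms pick up factors $t(r_m,b)^{-1}$ and $t(a,s_1)^{-1}$ from the bimodule structure~(\ref{bimodule-tens}), while $r_1$ and $s_n$ pair against degree-zero identities and so are untwisted.
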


Moving on, we also will need to use the diagonal map
\begin{equation}\label{eqn:diagonal}
   \Delta\colon  BR\ot BS \longrightarrow BR\ot BS\ot BR\ot BS
\end{equation}
from \cite{GNW}, 
which is by definition given by
\begin{align*}
\Delta \ [ & r_1  | \cdots |  r_{m} ] \otimes 
  [s_1 | \cdots | s_n ] = \\ 
&   \sum_{i,j}^{} (-1)^{(m-i)j}
t ( a_i,b_j)^{-1} \, [r_1 | \cdots | r_i]
    \otimes [ s_1 | \cdots | s_j ] \otimes 
 [r_{i+1} | \cdots | r_{m} ] \otimes 
     [s_{i+1} | \cdots | s_n ] ,
\end{align*}
the sum over $1\leq i \leq m$, $1\leq j\leq n$, 
where $a_i$ is the $A$-degree of $[r_{i+1}|\cdots |r_m]$, and $b_j$ is the $B$-degree of $[s_1| \cdots | s_j ]$.

\subsection*{The complex $C^*_t(R,S,M,N)$ and its structure} 
The differential described in Proposition \ref{diff-prop} above induces a dual differential on
\begin{align*}
\Hom(BR\otimes BS,\, &M\otimes N) \\ 
 \cong &\  \Hom_{(R\ot^t S)^{\ev}}((R\ot ^t S)\ot BR\ot BS\ot (R\ot^t S),M\ot^tN).
\end{align*}
We will write $C^*_t(R,S,M,N)$ for the complex $\Hom(BR\otimes BS,M\otimes N) $ equipped with this differential. By construction, the homology of this complex is the $A\oplus B$-graded Hochschild cohomology
$\HH^*(R\ot^t S,M\ot^tN)$.

Taking $M=R$ and $N=S$ the complex  $C^*_t(R,S,R,S)$ has a natural product
\[
f\smile g = \mu (f\otimes g) \Delta
\]
where $\mu $ is the product on $R\otimes^tS$ and $\Delta$ is the diagonal map (\ref{eqn:diagonal}). In \cite{GNW} it is explained why this computes the usual cup product on $\HH^*(R\otimes^tS,R\otimes^tS)$. In fact, one can check that this makes $C^*_t(R,S,M,N)$ into a dg algebra quasi-isomorphic to the usual Hochschild cochain algebra of $R\otimes^t S$.

Let us introduce some notation which will be useful for the rest of the paper. With enough finiteness conditions (see the beginning of Section~\ref{sec:main})
any element of $C^*_t(R,S,M,N)$ can be split into a sum of parallel tensor products of maps $BR \to M$ and $BS\to N$. We use the square $\boxtimes$ as our notation for this parallel tensor product, so
\begin{equation}\label{box-notation}
    (f\boxtimes g)([r]\otimes [s])=
(-1)^{mn}\ f[r]\otimes g[s]
\end{equation}
for elements $f\in C^m(R,M)$, $g\in C^n(S,N)$, $[r]= [r_1|\cdots |r_m]\in B_mR$ and $[s]=[s_1|\cdots |s_n]\in B_nS$. With this notation, expanding the cup product explicitly on elements gives the following lemma. 

\begin{lemma}\label{cup-prod-lemma} Take elements $f\in C^m(R,R)^a$, $f'\in C^{m'}(R,R)^{a'}$, $g\in C^n(S,S)^b$ and $g'\in C^{n'}(S,S)^{b'}$. Then the cup product on $C^*_t(R,S,R,S)$ satisfies
\begin{align*}
\big((f\boxtimes g)  & \smile (f'\boxtimes g')\big) \ \big([r_1|\cdots | r_{m+m'}]\otimes [s_1|\cdots | s_{n+n'}]\big) \\
& = (-1)^i \cdot t^*\cdot f[r_1|\cdots | r_m]f'[r_{m+1}|\cdots |r_{m+m'}] \otimes g[s_1|\cdots | s_n]g'[s_{n+1}|\cdots |s_{n+n'}]
\end{align*}
\begin{align*}
\text{where}\quad\quad & i = mn'+nn'+m'n' +mm'+mn\\
\text{and}\ \  \quad\quad & t^* =  t(a',b)t(a',s)t(r',b).
\end{align*}  
Here $r'$ is the $A$-degree of $[r_{m+1}|\cdots |r_{m+m'}]$ and $s$ is the $B$-degree of $[s_1|\cdots | s_n]$.
\end{lemma}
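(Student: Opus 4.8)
The plan is to prove Lemma~\ref{cup-prod-lemma} by directly unwinding the definition $f\smile g = \mu(f\otimes g)\Delta$ applied to the element $[r_1|\cdots|r_{m+m'}]\otimes[s_1|\cdots|s_{n+n'}]$. First I would apply the diagonal map $\Delta$ from~(\ref{eqn:diagonal}) to this element. Since $f\boxtimes g$ and $f'\boxtimes g'$ are concentrated in bidegrees $(m,n)$ and $(m',n')$ respectively, the product $(f\boxtimes g)\otimes(f'\boxtimes g')$ annihilates all terms of $\Delta$ except the single summand with $i=m$ and $j=n$, namely
\[
(-1)^{(m+m'-m)n}\,t(a_m,b_n)^{-1}\,[r_1|\cdots|r_m]\otimes[s_1|\cdots|s_n]\otimes[r_{m+1}|\cdots|r_{m+m'}]\otimes[s_{n+1}|\cdots|s_{n+n'}].
\]
Here the index conventions from~(\ref{eqn:diagonal}) must be read carefully: $a_m$ is the $A$-degree of $[r_{m+1}|\cdots|r_{m+m'}]$, which is what the lemma calls $r'$, and $b_n$ is the $B$-degree of $[s_1|\cdots|s_n]$, which is what the lemma calls $s$. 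So the sign and twist contributed by $\Delta$ alone are $(-1)^{m'n}$ and $t(r',s)^{-1}$.

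Next I would apply $(f\boxtimes g)\otimes(f'\boxtimes g')$ to this surviving term, using the definition of $\boxtimes$ in~(\ref{box-notation}). Each $\boxtimes$ contributes a Koszul sign: $f\boxtimes g$ on a bidegree-$(m,n)$ argument gives $(-1)^{mn}$, and $f'\boxtimes g'$ on a bidegree-$(m',n')$ argument gives $(-1)^{m'n'}$. This produces the four outputs $f[r_1|\cdots|r_m]$, $g[s_1|\cdots|s_n]$, $f'[r_{m+1}|\cdots|r_{m+m'}]$, $g'[s_{n+1}|\cdots|s_{n+n'}]$ arranged in the order dictated by the tensor factors of $\Delta$, i.e.\ as an element of $(R\ot^tS)\ot(R\ot^tS)$.

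The final step is to apply the multiplication $\mu$ on $R\ot^tS$, and this is where I expect the bookkeeping to be most delicate. Multiplying $(f[r_1|\cdots|r_m]\ot g[s_1|\cdots|s_n])\cdot(f'[r_{m+1}|\cdots|r_{m+m'}]\ot g'[s_{n+1}|\cdots|s_{n+n'}])$ invokes the twisted rule $(u\ot v)(u'\ot v')=t(u',v)\,uu'\ot vv'$, which produces the factor $t(a',b)$ since $f'[\cdots]$ has $A$-degree $a'$ (as $f'\in C^{m'}(R,R)^{a'}$ preserves degree up to the shift $a'$ and the inputs $r_{m+1},\ldots$ have total degree $r'$, so the output lands in degree $r'+a'$) and $g[\cdots]$ has $B$-degree $b+s$ or similar. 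The hard part will be tracking these output degrees precisely: the degree of $f'[r_{m+1}|\cdots|r_{m+m'}]$ is $r'+a'$ and the degree of $g[s_1|\cdots|s_n]$ is $b+s$, so the twisted multiplication contributes $t(r'+a',\,b+s)$, which expands by bicharacter bilinearity into $t(a',b)\,t(a',s)\,t(r',b)\,t(r',s)$. Combining this with the $t(r',s)^{-1}$ from $\Delta$ leaves exactly $t^*=t(a',b)t(a',s)t(r',b)$, confirming the stated twist. Collecting all the signs $(-1)^{m'n}$ from $\Delta$ and $(-1)^{mn+m'n'}$ from the two $\boxtimes$ operations, and comparing with the claimed exponent $i=mn'+nn'+m'n'+mm'+mn$, I would verify the discrepancy is even (so irrelevant mod $2$) or more likely that I have mislabeled a Koszul sign somewhere; reconciling my naive sign count with the asserted $i$ is the one place where I anticipate needing to recheck the conventions in~(\ref{box-notation}) and the placement of the inner $t(a_i,b_j)^{-1}$, since the apparent presence of terms like $mn'$ and $nn'$ in $i$ suggests additional Koszul signs arise from commuting the middle two tensor factors $[s_1|\cdots|s_n]$ and $[r_{m+1}|\cdots|r_{m+m'}]$ past each other when regrouping into the two $R\ot^tS$ factors.
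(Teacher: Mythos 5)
Your route is exactly the intended one---the paper offers no separate argument for this lemma beyond the phrase ``expanding the cup product explicitly on elements,'' so a direct unwinding of $\mu(f\otimes g)\Delta$ is the proof. Your twist bookkeeping is complete and correct: the surviving summand of $\Delta$ is indeed $i=m$, $j=n$ with twist $t(r',s)^{-1}$ (your reading of the index conventions $a_m=r'$, $b_n=s$ is right), and the twisted multiplication contributes $t(a'+r',\,b+s)=t(a',b)\,t(a',s)\,t(r',b)\,t(r',s)$ since $f'[r_{m+1}|\cdots|r_{m+m'}]$ has $A$-degree $a'+r'$ and $g[s_1|\cdots|s_n]$ has $B$-degree $b+s$; cancelling $t(r',s)$ gives exactly $t^*$.

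The one genuine gap is the sign, which you flag but leave unresolved, and your conjecture about its source is wrong. The discrepancy does not come from ``commuting the middle two tensor factors'' $[s_1|\cdots|s_n]$ and $[r_{m+1}|\cdots|r_{m+m'}]$ past each other: that transposition is already paid for inside the definition of $\Delta$, whose sign $(-1)^{(m-i)j}$ and twist $t(a_i,b_j)^{-1}$ are precisely its cost. What you omitted is the standard Koszul sign for \emph{evaluating} a tensor product of homogeneous maps on a tensor product of homogeneous elements: writing $X=[r_1|\cdots|r_m]\otimes[s_1|\cdots|s_n]$ (degree $m+n$) and $Y=[r_{m+1}|\cdots|r_{m+m'}]\otimes[s_{n+1}|\cdots|s_{n+n'}]$, one has
\[
\bigl((f\boxtimes g)\otimes(f'\boxtimes g')\bigr)(X\otimes Y)
=(-1)^{(m'+n')(m+n)}\,(f\boxtimes g)(X)\otimes(f'\boxtimes g')(Y),
\]
since $f'\boxtimes g'$ has degree $m'+n'$ and must pass $X$. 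Adding this exponent to your collected $m'n+mn+m'n'$ gives, modulo $2$,
\[
m'n+mn+m'n'+(m+n)(m'+n')
= mn'+nn'+m'n'+mm'+mn = i,
\]
since the two copies of $m'n$ cancel. With that sign inserted your computation closes completely and agrees with the lemma; without it, the argument as written does not reproduce the stated $i$ and is incomplete.
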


\section{Main Result}\label{sec:main}

\subsection*{}\label{finiteness-cond} 
First we impose some finiteness conditions: we require each component $R^a$ of $R$ to be finite dimensional, and we require that the set ${\rm supp}(R)\subseteq A$ on which $R$ is nonzero satisfies the condition that $n$-fold addition ${\rm supp}(R)^{\times n}\to A$ has finite fibers for all $n$. This just means that $R^{\otimes n}$ is degree-wise finite dimensional for each $n$. Certainly if $R$ is finite dimensional (in total) then there is nothing to worry about. We assume the same for $S$ and its $B$-grading.

Using the notation~(\ref{box-notation}), we define a map of $\Z\oplus A \oplus B$-graded vector spaces
\[
   \phi \colon \Hom(BR,M)\otimes \Hom(BS,N)\longrightarrow \Hom(BR\otimes BS,M\otimes N)
\] 
\[
f\otimes g \quad \mapsto\quad  f\boxtimes g.
\]

\begin{theorem}\label{Main-theorem}\label{vect-space-theorem}
Under the just stated finiteness conditions, $\phi$ induces an isomorphism
\[
\HH^*(R\ot^t S,M\ot^tN)\cong \bigoplus_{a\in A, \ b\in B} \ \HH^*(R,M_{\hat{b}})^a\ot \HH^*(S,{}_{\hat{a}}N)^b
\]
of graded vector spaces.
\end{theorem}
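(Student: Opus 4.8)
The plan is to show that the map $\phi$ is a quasi-isomorphism onto the target after identifying the target with a direct sum decomposition of the source complex. The essential point is that the twisted tensor product differential on $C^*_t(R,S,M,N)$ almost splits as a tensor product of the Hochschild differentials on $C^*(R,M)$ and $C^*(S,N)$, but the cross terms (the fourth and third terms in Proposition~\ref{diff-prop}, carrying the factors $t(a,s_1)^{-1}$ and $t(r_m,b)^{-1}$) obstruct a naive K\"unneth argument. My strategy is to absorb precisely these twisting factors into a reinterpretation of the module structures, so that the cross terms vanish on each graded piece.

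First I would compute, using equation~(\ref{box-notation}) and the dual of the differential in Proposition~\ref{diff-prop}, the formula for $\partial(f\boxtimes g)$ explicitly in terms of $\partial f$, $\partial g$, and the two boundary-crossing terms. Here is where the finiteness conditions from Section~\ref{sec:main} are used: they guarantee that every element of $C^*_t(R,S,M,N)$ decomposes as a finite sum of terms $f\boxtimes g$, so $\phi$ is in fact bijective as a map of graded vector spaces, and it suffices to understand the differential on such decomposable elements. I expect the outcome to be that, when $f\in C^m(R,M)^a$ and $g\in C^n(S,N)^b$ are homogeneous, the boundary-crossing terms reassemble into exactly the twisted right action $\act_{\hat b}$ on $M$ (via the factor $t(r_m,b)^{-1}$, matching equation~(\ref{action-def})) and the twisted left action ${}_{\hat a}(\,\cdot\,)$ on $N$ (via $t(a,s_1)^{-1}$). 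In other words, on the $(a,b)$-graded component the total differential becomes the tensor product differential of the complexes $C^*(R,M_{\hat b})$ and $C^*(S,{}_{\hat a}N)$, up to the sign bookkeeping already encoded in the $(-1)^{mn}$ of $\boxtimes$.

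Once the differential is identified this way, I would invoke the K\"unneth Theorem componentwise: since $R^{\otimes n}$ and $S^{\otimes n}$ are degree-wise finite dimensional and we work over a field $k$, there are no $\Tor$ obstructions, and on each $(a,b)$-graded summand the homology of the tensor product complex is the tensor product of the homologies, giving $\HH^*(R,M_{\hat b})^a\ot\HH^*(S,{}_{\hat a}N)^b$. Summing over $a\in A$ and $b\in B$ yields the claimed decomposition, and the fact that $\phi$ is a chain isomorphism onto the reindexed complex shows the induced map on cohomology is an isomorphism.

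The main obstacle, and the step deserving the most care, is the precise matching of the twisting scalars with the automorphisms $\hat a$ and $\hat b$: one must verify that the factor $t(r_m,b)^{-1}$ appearing in the resolution differential is exactly what converts the ordinary right action of $R$ on $M$ into the action $m\act_{\hat b}r = m\,\hat b(r)$ of the $\hat b$-twisted bimodule $M_{\hat b}$ (and dually for ${}_{\hat a}N$), including getting the direction of the twist and the sign of the exponent correct. This is purely a sign-and-scalar verification rather than a conceptual difficulty, but it is exactly the bookkeeping that Bergh and Oppermann's original techniques could not fully control, so it is where the argument must be airtight.
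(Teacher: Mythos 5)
Your proposal is correct and takes essentially the same route as the paper's proof: finiteness conditions give bijectivity of $\phi$ on each tri-graded piece, the boundary-crossing terms of Proposition~\ref{diff-prop} combine with the twisted bimodule action~(\ref{bimodule-tens}) to produce exactly the differentials of $C^*(R,M_{\hat{b}})^a\otimes C^*(S,{}_{\hat{a}}N)^b$ (the paper's key scalar check being $t(r_{m+1},b')^{-1}\,t(r_{m+1},b+b')=t(r_{m+1},b)$, where the twist comes from the combination of the resolution scalar and the action of $r_{m+1}\otimes 1$ on $g[s]$, not from the resolution scalar alone), and the K\"unneth theorem over the field $k$ finishes the argument. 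The only differences are cosmetic: the paper verifies the chain-map identity by computing $\phi\partial(f\otimes g)$ and $\partial(\phi(f\otimes g))$ separately in the two evaluation cases rather than rewriting the target differential, and it leaves the K\"unneth step implicit.
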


In the next two sections we refine the theorem by describing cup products and Gerstenhaber brackets
on Hochschild cohomology of $\HH^*(R\ot^t S)$ in terms of this direct sum decomposition
when $M=R$ and $N=S$. 

\begin{remark}
Let us note the difference between graded algebras and \emph{gradable} algebras. In some contexts the groups $A$ and $B$ may be considered only as auxiliary data allowing one to encode the twist which constructs $R\otimes^t S$. Then one might be interested in the ordinary $\Z$-graded Hochschild cohomology, computed by first forgetting the $A\oplus B$-grading. This can be a subtle issue with real consequences (Hochschild cohomology need not commute with forgetting a grading), but under our finiteness assumptions one \emph{can} safely do this by using Theorem \ref{Main-theorem} to compute the $A\oplus B$-graded Hochschild cohomology and then forgetting the grading. 
\end{remark}

\begin{proof}[Proof of Theorem \ref{vect-space-theorem}.]
Since each of $BR,BS,M$ and $N$ is degree-wise finite dimensional by hypothesis, $\phi$ is an isomorphism of $\Z\oplus A \oplus B$-graded vector spaces. Our task is only to show that
\begin{equation}\label{eqn:phi}
\phi\colon C^*(R,M_{\hat{b}})^a\otimes C^*(S,{}_{\hat{a}}N)^b\xrightarrow{\ \cong\ } C^*_t(R,S,M,N)^{a,b}
\end{equation} 
is a chain map. This will be a slightly tedious matter of checking that all the twists match up.

Take $f$ in $C^m(R,M_{\hat{b}})^a$ and $g$ in $C^n(S,{}_{\hat{a}}N)^b$.

We first compute $\phi\partial (f\otimes g)=\phi\partial(f)\otimes g+(-1)^m \phi f\otimes \partial(g)$. We can split this computation into two cases corresponding to the two summands here. Specifically, we can evaluate this: (\ref{case1}) on elements of the form $[r_1|\cdots |r_{m+1}]\otimes [s_1|\cdots|s_{n}]$; and (\ref{case2}) on elements of the form $[r_1|\cdots |r_{m}]\otimes [s_1|\cdots|s_{n+1}]$.

We first deal with case (\ref{case1}). In this calculation we use the short-hand notation $[r]=[r_1|\cdots |r_{m+1}]$ and   $[s]=[s_1|\cdots|s_{n}]$. We use our formula (\ref{HHdiff}) for the Hochschild differential. Also remember our action notation from equation~(\ref{action-convention}). Note that $\partial(g)[s]=0$, so
\begin{align}\tag{I}\label{case1}
\phi \partial (f\otimes g) ( [r]\otimes [s]) = &
-(-1)^m(-1)^{(m+1)n}\  r_1 f[r_2|\cdots |r_{m+1}]\otimes g[s]   \\
& -(-1)^m (-1)^{(m+1)n} \ fb_R[r]\otimes g[s]  \nonumber \\
& -(-1)^m (-1)^{(m+1)n} (-1)^{m+1} \ f[r_1|\cdots |r_m]\act_{\hat{b}} r_{m+1}\otimes g[s]  \nonumber \\
= &
-(-1)^{mn+m+n}\  r_1 f[r_2|\cdots |r_{m+1}]\otimes g[s]   \nonumber \\
& -(-1)^{mn+m+n} \ fb_R[r]\otimes g[s]   \nonumber \\
& -(-1)^{mn+m+n}(-1)^{m+1} t(r_{m+1},b) \ f[r_1|\cdots |r_m]r_{m+1}\otimes g[s] . \nonumber 
\end{align}
Similarly in case (\ref{case2}), with $[r]=[r_1|\cdots |r_m]$ and $[s]=[s_1|\cdots|s_{n+1}]$, we have $\partial(f)[r]=0$ so
\begin{align}\tag{II}\label{case2}
\phi \partial (f\otimes g) ( [r]\otimes [s]) = &
-(-1)^m (-1)^n(-1)^{m(n+1)}\   f[r]\otimes s_1\act_{\hat{a}} g[s_2|\cdots |s_{n+1}]   \\
& -(-1)^m (-1)^n(-1)^{m(n+1)}\ f[r]\otimes gb_S[s]   \nonumber \\
& -(-1)^m (-1)^n(-1)^{m(n+1)}(-1)^{n+1}  \ f[r]\otimes g[s_1|\cdots|s_n] s_{n+1}  \nonumber \\
= &
-(-1)^{mn+n}t(a,s_1)\   f[r]\otimes s_1 g[s_2|\cdots |s_{n+1}]   \nonumber\\
&-(-1)^{mn+n} \ f[r]\otimes gb_S[s]   \nonumber \\
& -(-1)^{mn+n}(-1)^{n+1} \ f[r]\otimes g[s_1|\cdots|s_n] s_{n+1} . \nonumber 
\end{align}

Next we calculate $\partial(\phi(f\ot g))([r]\ot [s])$ in the same two cases.

In case (\ref{case1}) we take $[r]=[r_1|\cdots |r_{m+1}]$ in $(B_{m+1}R)^{a'}$ and   $[s]=[s_1|\cdots|s_{n}]$ in $(B_nS)^{b'}$. We use Proposition \ref{diff-prop} to compute the differential on $C^*_t(R,S,M,N)$, and as in our previous calculation the choice of $[r]$ and $[s]$ means that half the terms described in Proposition \ref{diff-prop} vanish.
\begin{align}\tag{\ref{case1}}
\partial(\phi(f\ot g))([r]\ot [s]) = & -(-1)^{m+n}(-1)^{mn}\ (r_1\ot 1) f[r_2|\cdots|r_{m+1}]\ot g[s] \\
& -(-1)^{m+n}(-1)^{mn}\  fb_R[r]\ot g[s] \nonumber\\
& -(-1)^{m+n}(-1)^{mn}(-1)^{m+1} t(r_{m+1},b')^{-1}\  f[r_1|\cdots |r_m]\ot g[s] (r_{m+1}\ot 1)\ \nonumber\\
= & 
-(-1)^{mn+m+n}\  r_1 f[r_2|\cdots |r_{m+1}]\otimes g[s]  \nonumber \\
& -(-1)^{mn+m+n} \ fb_R[r]\otimes g[s]   \nonumber \\
& -(-1)^{mn+m+n}(-1)^{m+1} t(r_{m+1},b) \ f[r_1|\cdots |r_m]r_{m+1}\otimes g[s] .  \nonumber 
\end{align}
The last line comes from the fact that 
\[
t(r_{m+1},b')^{-1} f[r_1|\cdots |r_m]\ot g[s] (r_{m+1}\ot 1)=t(r_{m+1},b')^{-1}t(r_{m+1},b+b' )f[r_1|\cdots |r_m]r_{m+1}\ot g[s] ,
\] 
since $g[s]$ has degree $b+b'$ in $B$, and then $t(r_{m+1},b')^{-1}t(r_{m+1},b+b' )=t(r_{m+1},b )$. The result matches exactly the calculation of $\phi \partial (f\otimes g) ( [r]\otimes [s])$ in case (\ref{case1}).

It remains to compute $\partial(\phi(f\ot g))([r]\ot [s])$ in case (\ref{case2}), with $[r]=[r_1|\cdots |r_m]$ in $(B_mR)^{a'}$ and $[s]=[s_1|\cdots|s_{n+1}]$ in $(B_{n+1}S)^{b'}$.
\begin{align}\tag{\ref{case2}}
\partial(\phi(f\ot g))([r]\ot [s]) = & -(-1)^{m+n}(-1)^{mn}(-1)^m t(a',s_{n+1})^{-1} \ (1\ot s_1) f[r]\ot g[s_2|\cdots|s_{n+1}]  \\
& -(-1)^{m+n}(-1)^{mn}(-1)^m\  f[r]\ot gb_S[s] \nonumber\\
& -(-1)^{m+n}(-1)^{mn}(-1)^{m+n+1} \  f[r]\ot g[s_1|\cdots|s_n] (1\ot s_{n+1})\ \nonumber\\
= & 
-(-1)^{mn+n} t(a,s_{n+1})\  f[r]\ot s_1 g[s_2|\cdots|s_{n+1}]
\nonumber \\
& -(-1)^{mn+n} \ f[r]\otimes gb_S[s]   \nonumber \\
& -(-1)^{mn+n}(-1)^{n+1}  \ f[r]\ot g[s_1|\cdots|s_n]s_{n+1} . \nonumber 
\end{align}
This matches the computation of $\phi \partial (f\otimes g) ( [r]\otimes [s])$ in case (\ref{case2}), so we are done.
\end{proof}

\section{The Cup Product}\label{sec:cup}

In this section and the next we explain how to handle to the algebraic structure in Theorem~\ref{Main-theorem}. But first we make a construction which seems to be interesting in its own right.

\subsection*{Orbit Hochschild cohomology}

The following construction applies to any algebra with a group action, but we may as well use the same notation as above and consider the (right) action of the group $B$ on $R$. The orbit Hochschild cohomology of $R$ is by definition  $\bigoplus_{b\in B} \HH^*(R, R_{\hat{b}})$.

We can define a cup product on this graded vector space 
by the following formula, in terms of the standard Hochschild cochain complex. If $f\in C^m(R, R_{\hat{b}})$ and  $f'\in C^{m'}(R, R_{\hat{b'}})$, 
then we set 
\begin{equation}\label{eqn:twisted-cup1}
(f\smile_t f')\ [r_1|\cdots|r_{m+m'}] = (-1)^{mm'}f[r_1|\cdots|r_m] \,\hat{b}(f'[r_{m+1}|\cdots|r_{m+m'}]),
\end{equation}
where the automorphism $\hat{b}$ is given by~(\ref{action-def}). 
Then $f\smile_t f'$ is an element of $C^{m+m'}(R, R_{\widehat{b+b'}})$.

One can think of this product as follows: from the $B$-action on $R$ we form an ``orbit endomorphism ring'' $\bigoplus_{b} \HH^*(R, R_{\hat{b}}) = \bigoplus_b \Hom_{D(R^\ev)}(R,R_{\hat{b}})$. Just as for ordinary Hochschild cohomology the product can be defined using the (derived) tensor product $\otimes_R$ on $D(R^\ev)$, but it must involve a twist. The automorphism $\hat{b}$ can naturally be considered as a map $R_{\hat{b'}}\to {}_{\hat{b}}R_{\hat{b}\hat{b'}}$, and the composition
\[
R\cong R\otimes_R R \xrightarrow{f\otimes_R f'} R_{\hat{b}}\otimes_R R_{\hat{b'}} \xrightarrow{1\otimes_R \hat{b}} R_{\hat{b}}\otimes_R {}_{\hat{b}}R_{\hat{b}\hat{b'}}\cong R_{\hat{b}\hat{b'}}
\]
is the map $f\smile_tf'\colon R\to R_{\hat{b}\hat{b'}}$.

It is straightforward to check that the product is associative. Moreover, orbit Hochschild cohomology is always twisted commutative in the following sense. 

\begin{proposition}\label{R-twisted-comm-prop}\label{degenerate-prod}
The product (\ref{eqn:twisted-cup1}) satisfies
\begin{align*}
    f\smile_t f'   
    & = (-1)^{mm'}t(a',b) f'\smile_t f 
\end{align*}
whenever $f\in \HH^m(R,R_{\hat{b}})^a$ and $f'\in \HH^{m'}(R,R_{\hat{b'}})^{a'}$. In particular,
\[
    f\smile_t f' = 0 \ \text{ unless }\  t(a',b)=t(a,b')^{-1} .
\]
\end{proposition}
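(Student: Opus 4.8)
The plan is to prove the twisted commutativity relation directly on cochains using formula~(\ref{eqn:twisted-cup1}), exploiting the fact that on the level of \emph{ordinary} Hochschild cohomology the usual cup product is graded commutative. The key observation is that the twisted product $\smile_t$ differs from an ordinary cup product only by an application of the automorphism $\hat b$ to the second factor, so the asymmetry introduced by the twist should be entirely accounted for by a single bicharacter factor $t(a',b)$.

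First I would write out both $f\smile_t f'$ and $f'\smile_t f$ on a generic bar element $[r_1|\cdots|r_{m+m'}]$ using~(\ref{eqn:twisted-cup1}), keeping careful track of both the Koszul sign $(-1)^{mm'}$ and the twisting automorphism. Concretely, $f\smile_t f'$ applies $\hat b$ to the output of $f'$, whereas $f'\smile_t f$ applies $\hat{b'}$ to the output of $f$. Next I would pass to cohomology and invoke graded commutativity of the \emph{ordinary} cup product: stripping away the automorphisms, the products $f\smile f'$ and $f'\smile f$ agree up to the sign $(-1)^{mm'}$, so the only genuine discrepancy between $f\smile_t f'$ and $(-1)^{mm'}f'\smile_t f$ is which automorphism is applied and to which factor. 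Since $f$ has $A$-degree $a$ and $f'$ has $A$-degree $a'$, formula~(\ref{action-def}) tells us that $\hat b$ scales an $A$-degree $a'$ element by $t(a',b)$ and $\hat{b'}$ scales an $A$-degree $a$ element by $t(a,b')$; carefully comparing the two expressions, the ratio of the two twistings collapses to the single scalar $t(a',b)$, yielding the stated identity $f\smile_t f' = (-1)^{mm'}t(a',b)\,f'\smile_t f$.

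For the ``in particular'' clause, I would simply apply the relation twice to obtain $f\smile_t f' = (-1)^{2mm'}t(a',b)t(a,b')\,f\smile_t f' = t(a',b)t(a,b')\,f\smile_t f'$; hence $f\smile_t f'$ must vanish unless $t(a',b)t(a,b')=1$, i.e.\ $t(a',b)=t(a,b')^{-1}$.

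The main obstacle will be organizing the bookkeeping of signs and twists so that the cross-terms from the automorphisms cancel cleanly. In particular, one must be careful that applying $\hat b$ only to the \emph{second} tensor factor (the output of $f'$, which carries $A$-degree $a'$) produces exactly $t(a',b)$ and nothing more, and that transporting the graded commutativity of the ordinary product through the twist does not introduce stray factors involving $a$ or $b'$ beyond those already bundled into $\hat{b'}$. Verifying that these contributions reduce precisely to $t(a',b)$ — rather than some larger product like $t(a',b)t(a,b')$ surviving in the commutation relation itself — is the delicate step, and it is exactly where the homogeneity degrees of $f$ and $f'$ must be tracked with care.
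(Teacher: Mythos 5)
There is a genuine gap at the central step. You propose to ``pass to cohomology and invoke graded commutativity of the ordinary cup product'' after ``stripping away the automorphisms,'' but no such statement is available to invoke: $f$ and $f'$ are cocycles only in the twisted complexes $C^*(R,R_{\hat{b}})$ and $C^*(R,R_{\hat{b'}})$, whose differentials (formula (\ref{HHdiff}) combined with the twisted right actions of (\ref{action-convention})) involve $\act_{\hat{b}}$ and $\act_{\hat{b'}}$ respectively. They are not cocycles in $C^*(R,R)$, so graded commutativity of $\HH^*(R,R)$ says nothing about them, and the ``stripped'' products $f\smile f'$ and $f'\smile f$ do not represent cohomology classes at all. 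Worse, the identity you are after is genuinely false at the cochain level, so no bookkeeping of signs and bicharacter values can make the two sides literally equal: already for $m=m'=0$ the claimed relation reads $t(a',b)\,ff'=t(a',b)t(a,b')\,f'f$, i.e.\ $ff'=t(a,b')\,f'f$ in $R$, which holds for cocycles (elements of $\HH^0(R,R_{\hat{b'}})$, where $rf'=f'\act_{\hat{b'}}r$ for all $r$) but certainly not for arbitrary homogeneous elements of a noncommutative $R$. The relation holds only modulo coboundaries, and producing the coboundary is the actual content of the proposition.

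That homotopy is exactly what the paper supplies and your proposal omits. The paper defines a twisted circle product $\circ_t$ (note the insertions of $\hat{b}r_{i+m'+1},\ldots,\hat{b}r_{m+m'-1}$ in its formula) and verifies the twisted Gerstenhaber homotopy identity
\[
\partial(f'\circ_t f)+\partial(f')\circ_t f+(-1)^{m'}f'\circ_t \partial(f) =  f\smile_t (\hat{b}^{-1}f'\hat{b})-  (-1)^{mm'}f'\smile_t f,
\]
together with the separate computation $\hat{b}^{-1}f'\hat{b}=t(a',b)^{-1}f'$ --- which is where the single scalar $t(a',b)$ actually comes from, rather than from a termwise comparison of the two expressions. (The paper also notes that a twisted Eckmann--Hilton argument in $D(R^{\ev})$ would serve instead; your proposal contains neither route.) Your deduction of the ``in particular'' clause, applying the commutation relation twice to get $t(a',b)t(a,b')=1$ whenever $f\smile_t f'\neq 0$, is correct and is the same as the paper's, but it rests on the unproven relation.
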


Note that this implies commutativity with respect to the braided monoidal structure given by $t$.

The second statement above is useful in calculations. It will mean that the Hochschild cohomology of a twisted tensor product often has a very degenerate cup product.

\begin{proof}[Sketch of proof]
One can do this with a twisted version of the usual Eckmann-Hilton argument~\cite{SA}, working in $D(R^\ev)$. But it will be useful for us later to introduce the twisted circle product, so we follow Gerstenhaber's original argument.

Given  $f\in C^m(R,R_{\hat{b}})$ and $f'\in C^{m'}(R,R_{\hat{b'}})$ we define an element $f'\circ_tf$ in $ C^{m+m'-1}(R,R_{\widehat{b+b'}})$ according to the rule
\begin{align*}
    (f'\circ_t f)&[r_1|\cdots|r_{m+m'-1}]= \\
    & \sum_{i}  (-1)^{i(m'+1)} 
 f'[ r_1|\cdots | r_{i} | f
   [ r_{i+1}| \cdots | r_{i+m'}]| \hat{b} r_{i+m'+1}
  |\cdots |  \hat{b}r_{m+m'-1} ].
\end{align*}
With this, a computation reveals that
\[
\partial(f'\circ_t f)+\partial(f')\circ_t f+(-1)^{m'}f'\circ_t \partial(f) =  f\smile_t (\hat{b}^{-1}f'\hat{b})-  (-1)^{mm'}f'\smile_t f. 
\]
So at the level of Hochschild cohomology,
\[
f'\smile_t f = (-1)^{mm'}f\smile_t(\hat{b}^{-1}f'\hat{b}).
\]
Another computation shows that 
$\hat{b}^{-1}f'\hat{b}=t(a',b)^{-1}f'$. The second statement follows from the first by exchanging 
$f$ and $f'$ twice.
\end{proof}

All of this applies just as well to the (left) action of $A$ on $S$. There is a natural product on the orbit Hochschild cohomology  $\bigoplus_{a\in A}\HH^*(S,{}_{\hat{a}}S)$ given by $g\smile_t g' = \hat{a}g\smile g'$, that is
\begin{equation}\label{eqn:twisted-cup2}
(g\smile_t g')[s_1|\cdots|s_{n+n'}] = (-1)^{nn'}\hat{a}(g[s_1|\cdots|s_n])\, \,g'[s_{m+1}|\cdots|s_{n+n'}]
\end{equation}
for $g\in C^n(S,{}_{\hat{a}}S)^b$ and $g'\in C^{n'}(S,{}_{\hat{a}'}S)^{b'}$. As before this is associative and twisted commutative in the following sense.
\begin{proposition}\label{S-twisted-comm-prop}
The product (\ref{eqn:twisted-cup2}) satisfies
\[
g\smile_t g' = (-1)^{nn'}t(a,b') g'\smile_t g
\]
whenever $g\in \HH^n(S,{}_{\hat{a}}S)^b$ and $g'\in \HH^{n'}(S,{}_{\hat{a}'}S)^{b'}$.
\end{proposition}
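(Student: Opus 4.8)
The plan is to run the mirror image of the argument sketched for Proposition~\ref{R-twisted-comm-prop}, now adapted to the \emph{left} action of $A$ on $S$ and to the left-twisted bimodules ${}_{\hat a}S$. The essential new ingredient is a twisted circle product on $\bigoplus_{a\in A}\HH^*(S,{}_{\hat a}S)$, defined so that the twist of the inner map is carried by the bar entries lying to the \emph{left} of an insertion (whereas in the $R$ case it was carried to the right). Concretely, for $g\in C^n(S,{}_{\hat a}S)^b$ and $g'\in C^{n'}(S,{}_{\hat a'}S)^{b'}$ I would insert $g$ into $g'$ and set
\[
(g'\circ_t g)[s_1|\cdots|s_{n+n'-1}]=\sum_i (-1)^{i(n'+1)}\, g'[\hat a s_1|\cdots|\hat a s_i| g[s_{i+1}|\cdots|s_{i+n}]|s_{i+n+1}|\cdots|s_{n+n'-1}],
\]
which produces an element of $C^{n+n'-1}(S,{}_{\widehat{a+a'}}S)$.

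First I would verify, at the chain level, the pre-Lie type identity that mirrors the one in the proof of Proposition~\ref{R-twisted-comm-prop}, namely
\[
\partial(g'\circ_t g)+\partial(g')\circ_t g+(-1)^{n'}g'\circ_t\partial(g)=g\smile_t(\hat a^{-1}g'\hat a)-(-1)^{nn'}g'\smile_t g.
\]
Passing to Hochschild cohomology, where $g$ and $g'$ are cocycles so the left-hand side becomes the coboundary $\partial(g'\circ_t g)$ and hence vanishes, this yields $(-1)^{nn'}g'\smile_t g=g\smile_t(\hat a^{-1}g'\hat a)$.

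Next I would compute the conjugation $\hat a^{-1}g'\hat a$ exactly as in the $R$ case: since $g'$ raises the $B$-degree by $b'$ and $\hat a$ scales a $B$-homogeneous element of degree $c$ by $t(a,c)$, precomposing with $\hat a$ on inputs and postcomposing with $\hat a^{-1}$ on the output contributes the scalar $t(a,c)\,t(a,c+b')^{-1}=t(a,b')^{-1}$, independent of $c$. Hence $\hat a^{-1}g'\hat a=t(a,b')^{-1}g'$, and substituting gives $(-1)^{nn'}g'\smile_t g=t(a,b')^{-1}\,g\smile_t g'$, which rearranges to $g\smile_t g'=(-1)^{nn'}t(a,b')\,g'\smile_t g$, as claimed.

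The main obstacle, and essentially the whole content of the argument, is establishing the chain-level identity above with every sign and twist in place. The delicate points are: checking that the $\hat a$-twists attached to the left-hand bar entries telescope correctly against the left-module structure of ${}_{\hat a}S$, so that the uncancelled terms assemble into exactly $g\smile_t(\hat a^{-1}g'\hat a)$ and $(-1)^{nn'}g'\smile_t g$; and confirming that $g'\circ_t g$ indeed takes values in ${}_{\widehat{a+a'}}S$. As a consistency check one can alternatively deduce the statement from Proposition~\ref{R-twisted-comm-prop} applied to $S^{\op}$ graded by $B$ with the flipped bicharacter $(b,a)\mapsto t(a,b)$, under which the left-twisted bimodules ${}_{\hat a}S$ become the right-twisted $S^{\op}$-bimodules $(S^{\op})_{\hat a}$; tracking the standard $\op$-sign on Hochschild cochains then reproduces the factor $(-1)^{nn'}t(a,b')$.
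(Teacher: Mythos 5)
Your overall strategy is the same as the paper's: the paper proves Proposition~\ref{S-twisted-comm-prop} only by saying it mirrors the sketch of Proposition~\ref{R-twisted-comm-prop} using the recorded circle product for $S$ (your version of that formula, with $\hat{a}$ on the left-hand entries and $g$ consuming $n$ entries, is the correct reading of the misprinted display there), and your conjugation computation $\hat{a}^{-1}g'\hat{a}=t(a,b')^{-1}g'$ is correct. The gap is the chain-level pre-Lie identity, which you rightly identify as ``essentially the whole content'' but then assert rather than check, and it fails as stated. Test it already for $n=n'=1$: with the sign pattern that makes the interior terms cancel, all of $\partial(g'\circ_t g)-\partial(g')\circ_t g-g'\circ_t\partial(g)$ collapses except
\[
-\hat{a}'(g[s_1])\,g'[s_2]\;-\;g'[\hat{a}s_1]\,g[s_2].
\]
The twist hitting the value of $g$ is $\hat{a}'$, coming from the twisted left action of ${}_{\hat{a}'}S$ in the leading term of $\partial(g')$ --- not $\hat{a}$. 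Since $\hat{a}'(g[s_1])=t(a',c+b)\,g[s_1]$ while $\hat{a}(g[s_1])=t(a,c+b)\,g[s_1]$, where $c$ is the $B$-degree of $s_1$, this term differs from $g\smile_t(\hat{a}^{-1}g'\hat{a})=t(a,b')^{-1}\,g\smile_t g'$ (computed with (\ref{eqn:twisted-cup2})) by a factor depending on the input degree $c$; it is not a scalar multiple of $g\smile_t g'$ at all, so no conjugation of $g'$ can absorb it. The $\hat{a}$-twists do not ``telescope'' as you claim: transplanting the $R$-side identity verbatim, with the conjugation sitting inside the inner-first cup term, is exactly what breaks in the left-twisted setting.

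What the computation actually shows is that the surviving terms assemble into twisted cups only if $g\smile_t g'$ carries the twist of the \emph{second} factor, i.e.\ $\pm\,\hat{a}'(g[\,\cdot\,])\,g'[\,\cdot\,]$ --- the convention forced in any case by matching the coefficient $t^*$ in the proof of Theorem~\ref{cup-prod-thm} against Lemma~\ref{cup-prod-lemma} --- and then the boundary equals $(g\smile_t g')+t(a,b')^{-1}(g'\smile_t g)$ up to sign, with the conjugation scalar attached to the outer-first term $g'\smile_t g$ rather than to $g\smile_t(\hat{a}^{-1}g'\hat{a})$. Note this yields the commutation twist $t(a,b')^{-1}$, which agrees with $t(a',b)$ whenever both products are nonvanishing (by the degeneracy of Proposition~\ref{degenerate-prod}) but matches the factor $t(a,b')$ you derive only when $t(a,b')^2=1$; so your route, besides resting on a false identity, cannot be repaired by adjusting signs alone, and this corner of the paper (the recorded $\circ_t$, the display (\ref{eqn:twisted-cup2})) contains twist misprints that must be straightened out before mirroring. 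The cleanest correct argument is the one you relegate to a consistency check: pass to $S^{\op}$ graded by $B$ with the flipped bicharacter, under which ${}_{\hat{a}}S$ becomes $(S^{\op})_{\hat{a}}$ and the statement reduces to Proposition~\ref{R-twisted-comm-prop} verbatim, with the op-signs and the reversal of cup order tracked explicitly. As written, your main argument has a genuine gap at its central step.
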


 The proof uses an analogous twisted circle product for $S$. We record it here for use in the next section:
\begin{align*}
    (g'\circ_t g)& [s_1|\cdots | s_{n+n'-1}] = \\
    & \sum_i (-1)^{i(n'+1)} g'[\hat{a}'s_1|\cdots | \hat{a}'s_i |
    g[s_{i+1} | \cdots | s_{i+n'} ]| s_{i+n'+1} | \cdots | s_{n+n'-1}] .
\end{align*}

\subsection*{The cup product in the main theorem} Because of the \emph{non-twisted} tensor product in the enveloping algebra $(R\otimes^t S)^\op\otimes (R\otimes^t S)$ which underlies Hochschild cohomology, there must be some kind of untwisting appearing in the main theorem. This is manifest in the following cup product on
\[
\big(\bigoplus_{b\in B} \HH^*(R, R_{\hat{b}})\big)\otimes \big( \bigoplus_{a\in A}\HH^*(S,{}_{\hat{a}}S) \big),
\]
which we define by  the formula
\begin{equation}\label{untwistedcupdef}
    (f\otimes g) \smile (f'\otimes g') = (-1)^{m'n}t(a',b)^{-1}(f\smile_t f')\otimes (g\smile_t g')
\end{equation}
where $g\in\HH^n(S,{}_{\hat{a}}S)^b$ and $f'\in\HH^{m'}(R,R_{\hat{b'}})^{a'}$.  This is an ``untwisted cup product'' because the twist $t(a',b)^{-1}$ is the inverse of the expected one.

With this product, the diagonal subalgebra 
\[
\bigoplus_{a\in A,\ b\in B} \HH^*(R, R_{\hat{b}})^a\otimes \HH^*(S,{}_{\hat{a}}S)^b
\]
becomes graded-commutative in the usual sense, because the  twists from Propositions~\ref{R-twisted-comm-prop} and~\ref{S-twisted-comm-prop} cancel out.   Hence, the following statement makes sense.

\begin{theorem}\label{cup-prod-thm}
With the just defined product (\ref{untwistedcupdef}) on the left-hand side, 
when $M=R$ and $N=S$, 
the isomorphism $\phi$ of Theorem~\ref{vect-space-theorem} is one of algebras:
\[
\bigoplus_{a\in A,\ b\in B} \HH^*(R,R_{\hat{b}})^a\ot \HH^*(S,{}_{\hat{a}}S)^b\xrightarrow{\ \cong \ }\HH^*(R\ot^t S).
\]
\end{theorem}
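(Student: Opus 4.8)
The plan is to deduce the theorem from a single chain-level identity. By Theorem~\ref{vect-space-theorem} the map $\phi$ is already a chain isomorphism inducing the stated isomorphism on cohomology, and both products in sight are induced by operations defined at the cochain level: the cup product on $\HH^*(R\ot^t S)$ is computed on $C^*_t(R,S,R,S)$ by Lemma~\ref{cup-prod-lemma}, while the product~(\ref{untwistedcupdef}) is assembled from the chain-level twisted cups~(\ref{eqn:twisted-cup1}) and~(\ref{eqn:twisted-cup2}). Hence it suffices to check that $\phi$ intertwines these two cochain-level products. Concretely, for $f\in C^m(R,R_{\hat b})^a$, $g\in C^n(S,{}_{\hat a}S)^b$, $f'\in C^{m'}(R,R_{\hat{b'}})^{a'}$ and $g'\in C^{n'}(S,{}_{\hat{a'}}S)^{b'}$, I would establish
\[
\phi\big((f\ot g)\smile(f'\ot g')\big)\ =\ (f\boxtimes g)\smile(f'\boxtimes g')
\]
in $C^*_t(R,S,R,S)$. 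Before the computation I would record two structural points: that~(\ref{untwistedcupdef}) sends a product of classes of bidegrees $(a,b)$ and $(a',b')$ into bidegree $(a+a',b+b')$, so that it genuinely restricts to the diagonal subalgebra $\bigoplus_{a,b}\HH^*(R,R_{\hat b})^a\ot\HH^*(S,{}_{\hat a}S)^b$ which is the image of $\phi$; and that $\phi$ is graded for the $A\oplus B$-grading by~(\ref{eqn:phi}).

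The verification itself is an evaluation of both sides on a generic element $[r_1|\cdots|r_{m+m'}]\ot[s_1|\cdots|s_{n+n'}]$, comparing the resulting scalars. The right-hand side is exactly Lemma~\ref{cup-prod-lemma}: it produces $(-1)^{i}\,t^*\cdot P$, where $P=f[r_1|\cdots|r_m]f'[r_{m+1}|\cdots|r_{m+m'}]\ot g[s_1|\cdots|s_n]g'[s_{n+1}|\cdots|s_{n+n'}]$, with $i=mn'+nn'+m'n'+mm'+mn$ and $t^*=t(a',b)t(a',s)t(r',b)$. For the left-hand side I would expand~(\ref{untwistedcupdef}), extracting the prefactor $(-1)^{m'n}t(a',b)^{-1}$, then apply the parallel-product formula~(\ref{box-notation}) to $(f\smile_t f')\boxtimes(g\smile_t g')$, and finally expand the two twisted cups by~(\ref{eqn:twisted-cup1}) and~(\ref{eqn:twisted-cup2}). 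The only nonformal ingredient is the effect of the twisting automorphisms on graded outputs: $\hat b$ acts on $f'[r_{m+1}|\cdots|r_{m+m'}]$, whose $A$-degree is the sum $r'+a'$ of the input degree and the degree of $f'$, contributing $t(r',b)t(a',b)$; symmetrically the $S$-side automorphism acts on $g[s_1|\cdots|s_n]$, whose $B$-degree is $s+b$, contributing the remaining values of $t$.

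Collecting terms, the surplus factor $t(a',b)$ manufactured by $\hat b$ is precisely cancelled by the inverse twist $t(a',b)^{-1}$ carried by~(\ref{untwistedcupdef}), and the total twist collapses to $t(a',b)t(a',s)t(r',b)=t^*$; separately the accumulated signs $m'n+(m+m')(n+n')+mm'+nn'$ reduce modulo $2$ to the exponent $i$ of Lemma~\ref{cup-prod-lemma}. I expect the main obstacle to be exactly this bookkeeping of twisting scalars: keeping straight which automorphism is applied to which factor, using consistently that the degree of a cochain's value is the sum of the input degree and the cochain's own degree, and tracking the Koszul signs generated by the reorderings inside $\boxtimes$. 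Once the displayed chain-level identity is in hand it descends to cohomology at once, so $\phi$ is an isomorphism of algebras. As a consistency check one can re-derive the same rule conceptually, realizing $\smile_t$ through the (derived) tensor products $\ot_R$ and $\ot_S$ twisted by the relevant automorphism, with the inverse twist in~(\ref{untwistedcupdef}) reflecting that the enveloping algebra $(R\ot^t S)^\op\ot(R\ot^t S)$ uses the \emph{untwisted} tensor product.
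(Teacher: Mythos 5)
Your proposal is correct and takes essentially the same route as the paper's proof: both reduce the theorem to the chain-level identity $\phi\big((f\ot g)\smile(f'\ot g')\big)=(f\boxtimes g)\smile(f'\boxtimes g')$, evaluate on a generic element $[r_1|\cdots|r_{m+m'}]\ot[s_1|\cdots|s_{n+n'}]$, and match the resulting sign and twisting scalar against Lemma~\ref{cup-prod-lemma}, with the prefactor $t(a',b)^{-1}$ of~(\ref{untwistedcupdef}) cancelling the surplus twist produced by $\hat{b}$. One remark: your collapse of the total twist to $t^*=t(a',b)t(a',s)t(r',b)$ tacitly requires the $S$-side twisted cup to apply the automorphism $\hat{a}'$ attached to the second factor $g'$ (contributing $t(a',b+s)$ rather than $t(a,b+s)$), which is the reading of~(\ref{eqn:twisted-cup2}) that Lemma~\ref{cup-prod-lemma} forces and under which the paper's asserted coefficient match is literally correct.
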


In order to establish fully that the second isomorphism in Theorem~\ref{Main-theorem} is
one of $\HH^*(R\ot^t S)$-modules, 
we should also define an action of $\HH^*(R\ot^t S)$ on $\bigoplus_{a,b} \HH^*(R,M_{\hat{b}})^a\ot \HH^*(S,{}_{\hat{a}}N)^b$ at the chain level, and show that $\phi$, defined in~(\ref{eqn:phi}), respects this action. Since this is formally extremely similar to the case $M=R$ and $N=S$ described above, without any further interesting features, we omit the proof. 

Combining Theorem~\ref{cup-prod-thm} and Proposition~\ref{degenerate-prod} yields the following consequence. 

\begin{corollary}\label{degenerate-cor}
If $x\in \HH^*(R\ot^t S,R\ot^t S)^{a,b}$ and $x'\in \HH^*(R\ot^t S,R\ot^t S)^{a',b'}$, then
\[
    x\smile x' = 0 \ \text{ unless }\  t(a',b)=t(a,b')^{-1}.
\]
\end{corollary}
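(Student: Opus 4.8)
The plan is to transport the statement across the algebra isomorphism $\phi$ of Theorem~\ref{cup-prod-thm} and then read off the vanishing from the degeneracy already recorded in Proposition~\ref{degenerate-prod}. Since $\phi$ is an isomorphism of algebras, $x\smile x'$ vanishes if and only if the corresponding product vanishes on the direct-sum side, so it suffices to prove the claim there.

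First I would note that $x$, being homogeneous of $A\oplus B$-degree $(a,b)$, corresponds under $\phi^{-1}$ to an element of the single summand $\HH^*(R,R_{\hat b})^a\ot\HH^*(S,{}_{\hat a}S)^b$, and likewise $x'$ corresponds to an element of $\HH^*(R,R_{\hat{b'}})^{a'}\ot\HH^*(S,{}_{\hat{a'}}S)^{b'}$. Writing each as a finite sum of pure tensors, $\phi^{-1}(x)=\sum_i f_i\ot g_i$ and $\phi^{-1}(x')=\sum_j f'_j\ot g'_j$, every $f_i$ lies in $\HH^{m_i}(R,R_{\hat b})^a$ and every $f'_j$ lies in $\HH^{m'_j}(R,R_{\hat{b'}})^{a'}$; the crucial point is that the multidegrees $a,b,a',b'$ are the same across all summands.

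Next I would expand the product bilinearly using formula~(\ref{untwistedcupdef}), giving
\[
\phi^{-1}(x\smile x')=\sum_{i,j}(-1)^{m'_j n_i}\,t(a',b)^{-1}\,(f_i\smile_t f'_j)\ot(g_i\smile_t g'_j).
\]
By Proposition~\ref{degenerate-prod}, each factor $f_i\smile_t f'_j$ is zero unless $t(a',b)=t(a,b')^{-1}$. Because this condition depends only on the fixed degrees $a,b,a',b'$, it is either satisfied for all terms or for none; when it fails, every summand vanishes and hence $x\smile x'=0$.

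There is essentially no obstacle here beyond bookkeeping: the only point requiring care is that homogeneity of $x$ and $x'$ forces the degrees appearing in Proposition~\ref{degenerate-prod} to be constant across the pure-tensor decomposition, so that the single scalar condition $t(a',b)=t(a,b')^{-1}$ governs the whole sum. One could equally invoke the $S$-factor, since a double application of the commutativity in Proposition~\ref{S-twisted-comm-prop} shows $g\smile_t g'=0$ under the identical condition; this confirms that the two factors are consistent, but the $R$-factor already suffices.
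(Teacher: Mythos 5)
Your proof is correct and follows exactly the route the paper intends: the paper proves this corollary in one line by ``combining Theorem~\ref{cup-prod-thm} and Proposition~\ref{degenerate-prod},'' which is precisely your transport along the algebra isomorphism $\phi$ followed by termwise application of the vanishing condition, and your bookkeeping (homogeneity forcing the degrees $a,b,a',b'$ to be constant across the pure-tensor decomposition, so that bilinearity kills every summand at once) correctly fills in the details the paper leaves implicit.
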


So, despite $\HH^*(R\ot^t S,R\ot^t S)$ being graded commutative, the fact that it is built out of twisted commutative algebras implies that its product is often very degenerate.

\begin{proof}[Proof of Theorem~\ref{cup-prod-thm}]
We will check that the product defined in this section agrees with the expression obtained in Lemma~\ref{cup-prod-lemma}. Let $f,f',g$ and  $g'$ be as in the statement of Lemma~\ref{cup-prod-lemma}. Firstly
\begin{equation}\label{cup-prod-thm-pf-eqn}
\phi\big((f\otimes g) \smile (f'\otimes g') \big) = (-1)^{m'n}t(a',b)^{-1}(f\smile_t f')\boxtimes (g\smile_t g'),
\end{equation}
so we compute 
\begin{align*}
(f\smile_t &\, f')  \boxtimes  (g\smile_t g') \ [r_1|\cdots | r_{m+m'}]\otimes [s_1|\cdots | s_{n+n'}] =\\
&   (-1)^{i} \cdot f[r_1|\cdots|r_m] \,\hat{b}(f'[r_{m+1}|\cdots|r_{m+m'}]) \otimes \hat{a}(g[s_1|\cdots|s_n])\, \,g'[s_{m+1}|\cdots|s_{n+n'}]
\end{align*}
where $i = mm'+nn'+(m+m')(n+n')$. This is by defintion
\[
(-1)^{i}\cdot t^*\cdot  f[r_1|\cdots | r_m]f'[r_{m+1}|\cdots |r_{m+m'}] \otimes g[s_1|\cdots | s_n]g'[s_{n+1}|\cdots |s_{n+n'}]
\]
where $t^*= t(a'+r',b)t(a,b+s)$, with $r'$ being the $A$-degree of $[r_{m+1}|\cdots |r_{m+m'}]$ and $s$ being the $B$-degree of $[s_1|\cdots | s_n]$.

If we multiply this by the scalar $(-1)^{m'n}t(a',b)^{-1}$ appearing in \eqref{cup-prod-thm-pf-eqn}, then we find that our coefficient matches that of Lemma~\ref{cup-prod-lemma}, so we are done.
\end{proof}

\section{The Gerstenhaber Bracket}\label{sec:bracket}

We start by defining a bracket on $\bigoplus_{b\in B} \HH^*(R, R_{\hat{b}})$. Once again we point out that this applies to any algebra with a group action. 
If $f\in C^m (R, R_{\hat{b}})^{a}$ and $f'\in C^{m'}(R, R_{\hat{b'}})^{a'}$, then by definition
\[
   {[} f , f' {]}_t = t^{-1}(a,b')f\circ_t f' - (-1)^{(m-1)(m'-1)} 
      f'\circ_t   f ,
\]
where the twisted circle product $\circ_t$ was defined in Section~\ref{sec:cup}.

Similarly, when $g\in C^n(S, {}_{\hat{a}}S)^b$ and $g\in C^{n'}(S, {}_{\hat{a}'}S)^{b'}$, we define
\[
   [g,g']_t = g\circ_t g' - (-1)^{(n-1)(n'-1)} t^{-1}(a',b) g'\circ_t g .
\]
We refrain from discussing the sense in which these brackets make $\bigoplus_{b\in B} \HH^*(R, R_{\hat{b}})$ and $\bigoplus_{a\in A} \HH^*(S, {}_{\hat{a}}S)$ into ``twisted Gerstenhaber algebras''.

In the following theorem we combine the two brackets, adapting and generalizing a definition of Manin~\cite{M} to define a bracket in the setting of twisted tensor products.

\begin{theorem}\label{thm:G-alg}
The isomorphism 
$\phi$ of Theorem~\ref{vect-space-theorem} is one of Gerstenhaber
algebras: 
\[
   \bigoplus_{a\in A, \ b\in B} \HH^*(R, R_{\hat{b}})^a\ot
   \HH^*(S, {}_{\hat{a}}S)^b \xrightarrow{\ \cong\ }
   \HH^*(R\ot^t S),
\] 
where on the left-hand side, the bracket is defined by ${[}f\ot g, f'\ot g' {]} =$
\[
  (-1)^{(m-1)n'} {[}f,f'{]}_t \ot (g\smile_t g')
   + (-1)^{m(n'-1)} (f\smile_t f')\ot {[}g,g'{]}_t .
\]
\end{theorem}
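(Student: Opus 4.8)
The plan is to verify directly at the chain level that $\phi$ intertwines the combined bracket on the left with the Gerstenhaber bracket on $\HH^*(R\ot^t S)$, exactly as was done for the cup product in the proof of Theorem~\ref{cup-prod-thm}. Since the Gerstenhaber bracket is built from the circle product on the Hochschild cochain complex of $R\ot^t S$, computed here through $C^*_t(R,S,R,S)$ using the diagonal map $\Delta$, the strategy reduces to analyzing a \emph{chain-level circle product}. The key object will be the circle product on $C^*_t(R,S,R,S)$ arising from the bar resolution $B(R\ot^tS)$ of Proposition~\ref{diff-prop}; I would first record its explicit formula by the usual Gerstenhaber recipe (insert one cochain into the other at each position, with the appropriate signs), then evaluate it on parallel tensor products $f\boxtimes g$ and $f'\boxtimes g'$ and track how it decomposes into the twisted circle products $\circ_t$ on $R$ and $S$ introduced in Section~\ref{sec:cup}.

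First I would establish the analogue of equation~\eqref{cup-prod-thm-pf-eqn}: a clean formula expressing $\phi^{-1}\bigl((f\boxtimes g)\circ(f'\boxtimes g')\bigr)$ as a signed, twisted combination of $(f\circ_t f')\boxtimes(g\smile_t g')$ and $(f\smile_t f')\boxtimes(g\circ_t g')$. The circle product on $R\ot^t S$ inserts a single element $f'\boxtimes g'$ into a single slot of $f\boxtimes g$; because $f$ sees only the $R$-tensor-factor entries and $g$ only the $S$-factor entries, each insertion either feeds the $R$-part of $f'\boxtimes g'$ into $f$ (producing $f\circ_t f'$ in the first factor while $g,g'$ merely concatenate, giving $g\smile_t g'$) or feeds the $S$-part into $g$. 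The twists $\hat b$ and $\hat a$ that distinguish $\circ_t$ from the naive circle product should emerge precisely from the twisting scalars $t(r_m,b)^{-1}$ and $t(a,s_1)^{-1}$ built into the differential and diagonal; this is the same bookkeeping as in Lemma~\ref{cup-prod-lemma}, so I expect the signs and characters to be the governing difficulty rather than any conceptual step.

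Granting this chain-level decomposition of the circle product, the bracket formula follows by antisymmetrizing. I would substitute the decomposition into $[f\boxtimes g,\,f'\boxtimes g'] = (f\boxtimes g)\circ(f'\boxtimes g') - (-1)^{(|f\boxtimes g|-1)(|f'\boxtimes g'|-1)}(f'\boxtimes g')\circ(f\boxtimes g)$, collect the $R$-insertion terms and the $S$-insertion terms separately, and check that the resulting signs and bicharacter factors reassemble into exactly the two twisted brackets $[f,f']_t$ and $[g,g']_t$ as defined at the start of this section, each multiplied by the complementary twisted cup product and the stated sign $(-1)^{(m-1)n'}$ or $(-1)^{m(n'-1)}$. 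The asymmetry between the two brackets in the theorem (the factor $t^{-1}(a,b')$ appears on $\circ_t$ for $R$ but $t^{-1}(a',b)$ appears on the reversed term for $S$) is the telltale sign that one must keep the left-action twist for $S$ and the right-action twist for $R$ carefully separated; matching these is where I expect the real work to lie.

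The main obstacle, as in the cup-product case, will be purely combinatorial: verifying that the exponent of $(-1)$ and the product of $t$-values coming out of the raw circle-product computation on $R\ot^t S$ agree with those predicted by Manin's combined-bracket formula. Because the circle product is not associative and mixes the two tensor factors through the diagonal $\Delta$, the sign accounting is more delicate than for $\smile$; I would organize it by treating the index ranges of the insertion position separately (insertion entirely within the $R$-block versus the $S$-block) and invoking the already-proven chain-level identities for $\circ_t$ on each factor from Section~\ref{sec:cup} to avoid re-deriving them. Once the circle-product identity is in hand, antisymmetrization and Theorem~\ref{cup-prod-thm} make the bracket statement essentially immediate.
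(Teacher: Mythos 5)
Your high-level outline---compute a chain-level circle product on $C^*_t(R,S,R,S)$, decompose it into terms of the form $(f\circ_t f')\boxtimes(g\smile_t g')$ and $(f\smile_t f')\boxtimes(g\circ_t g')$, then antisymmetrize---does match the shape of the paper's argument, and the identities you predict are the ones that actually appear. But there is a genuine gap at the foundation. You propose to ``record the explicit formula by the usual Gerstenhaber recipe'' on the complex of Proposition~\ref{diff-prop}, which you misidentify as the bar resolution of $R\ot^t S$. It is not: it is the twisted tensor product $(R\ot^t S)\ot BR\ot BS\ot (R\ot^t S)$ of the two bar resolutions, a much smaller resolution than the bar resolution of $R\ot^t S$. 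The classical insertion formula for the circle product is only known to compute the Gerstenhaber bracket on the honest bar complex $\Hom\bigl((R\ot^t S)^{\ot\bullet},\,R\ot^t S\bigr)$; on another resolution no naive insertion operation is available or justified a priori. Indeed, a cochain $f\boxtimes g$ takes inputs $[r_1|\cdots|r_{m''}]\ot[s_1|\cdots|s_{n''}]$, while the output of $f'\boxtimes g'$ lies in $R\ot S$, so ``inserting it into a single slot'' is not even a well-defined operation on $C^*_t$ without additional structure, let alone one provably computing the bracket of $\HH^*(R\ot^t S)$. The cup-product case you lean on is genuinely easier: $f\smile g=\mu(f\ot g)\Delta$ needs only the diagonal $\Delta$ of~(\ref{eqn:diagonal}), whereas the bracket needs more.

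The paper fills exactly this gap with the machinery of~\cite{NW}: because $\Delta$ is coassociative and counital, the circle product can be computed on $C^*_t$ as $(f\boxtimes g)\,\Phi\,(1\ot_{R\ot^t S}(f'\boxtimes g')\ot_{R\ot^t S}1)\,\Delta^{(2)}$, where $\Phi=(G_{B(R)}\ot F^{\scl}_{B(S)}+F^{\scr}_{B(R)}\ot G_{B(S)})\sigma$ is the explicit contracting homotopy of~\cite[Lemma~3.5]{GNW}; the insertion maps $G$, the truncations $F^{\scl},F^{\scr}$, and the twisting coefficient $(-1)^{jp}t(x',y)$ inside $\sigma$ are precisely where the characters $\hat b$, $\hat a'$ and the factors $t^{-1}(a,b')$, $t^{-1}(a',b)$ in the final formula are produced. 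Without invoking this (or, alternatively, constructing explicit comparison chain maps between the resolution of Proposition~\ref{diff-prop} and the bar resolution of $R\ot^t S$ and transporting the bracket along them), your first step has no justification, and everything downstream rests on it. A smaller omission: after antisymmetrizing, the raw computation yields $g'\smile_t g$ and $f'\smile_t f$ in the ``wrong'' order, and you must apply the twisted commutativity of Propositions~\ref{R-twisted-comm-prop} and~\ref{S-twisted-comm-prop} to reverse them before the four terms assemble into $[f,f']_t\ot(g\smile_t g')$ and $(f\smile_t f')\ot[g,g']_t$; this reversal is exactly what creates the asymmetric placement of the two $t^{-1}$ factors you noticed, so it cannot be waved through as mere collection of terms.
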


Before proving the theorem, 
we summarize some results from~\cite{GNW,NW} that we will
need for computing brackets. 
Our bracket formula generalizes that given by Le and Zhou~\cite{LZ}, 
who showed that the Hochschild cohomology ring of a tensor product
of algebras is isomorphic, as a Gerstenhaber algebra,
to the graded tensor product of their Hochschild cohomology rings.
Our result is a twisted analogue.

Note that the diagonal map $\Delta$ of~(\ref{eqn:diagonal})  
is coassociative and counital by its definition.
Therefore the Gerstenhaber bracket of $f\boxtimes g$ and $f'\boxtimes g'$,
representing elements of $\HH^*(R\ot^t S)$, 
may be computed as follows~\cite{NW}. 
The circle product  $(f\boxtimes g)\circ (f'\boxtimes g')$
can be taken to be the following composition:
\begin{equation}\label{eqn:circle}
      (f\boxtimes g) \Phi  (1\ot_{R\ot^t S} 
       (f'\boxtimes g')\ot_{R\ot^t S} 1) \Delta^{(2)},
\end{equation} 
 where $\Delta^{(2)}=(\Delta\ot 1)\Delta$, and where  $\Phi$ is the homotopy given in~\cite[Lemma 3.5]{GNW} for twisted
tensor products, in accordance with  
the theory of~\cite{NW}. It is given by
\[
    \Phi = (G_{B(R)}\ot F^l_{B(S)} + F^r_{B(R)}\ot G_{B(S)}) \sigma,
\]
with $G_{B(R)}$, $F^l_{B(S)}$, $F^r_{B(R)}$, $G_{B(S)}$,  $\sigma$
defined as below, and $B(R)$, $B(S)$ the bar resolutions for $R$ and $S$.
Letting $\mu_R\colon  B(R) \rightarrow R$ be the natural quasi-isomorphism, set
\begin{equation}\label{FlFr}
   F^{\scl}_{B(R)} = \mu_R\ot 1_{B(R)}
    \ \ \ \mbox{ and } \ \ \
   F^{\scr}_{B(R)}=1_{B(R)}\ot \mu_R,
\end{equation}
as maps from $B(R)\ot_R B(R)$ to $B(R)$, 
where $1_{B(R)}$ is the identity map on $B(R)$,
and similarly for $B(S)$. The map
$G_{B(R)}\colon  B(R)\ot _R B(R) \rightarrow B(R)[1]$ is defined by 
\begin{equation}\label{equation-G}
   G_{B(R)} ((r_0\ot [r_1 | \cdots r_{p-1}] \ot r_p) \ot_R (1
    \ot [r_{p+1} |\cdots | r_n] \ot 
     r_{n+1}))  \hspace{4cm}
\end{equation}
$$
  \hspace{3cm}   = (-1)^{p-1} r_0\ot [r_1 | \cdots | r_{p-1}
      | r_p | r_{p+1} |\cdots | r_n ] \ot
     r_{n+1} $$
for all $r_0,\ldots, r_{n+1}$ in $R$. 
The chain map 
\[
  \sigma\colon  (B(R)\ot  B(S))\ot _{R\ot ^t S}(B(R)\ot B(S)) \rightarrow
    (B(R)\ot _R B(R)) \ot ^t (B(S)\ot _S B(S))
\]
is an isomorphism of $(R\ot ^t S)^\ev$-modules in each degree given by
\[
  \sigma ((x\ot y)\ot (x'\ot y')) = (-1)^{jp} t(x' , y)
   (x\ot x') \ot (y\ot y')
\]
on homogeneous elements in 
$(B_i(R)\ot  B_j(S)) \ot _{R\ot ^t S} (B_p(R)\ot B_q(S))$.

\begin{proof}[Proof of Theorem~\ref{thm:G-alg}]
Let $f,f',g,g'$ be as before. 
We will compute the bracket $[f\boxtimes g , f ' \boxtimes g ']$ 
on the right side of the isomorphism in the statement of
the theorem by applying it to elements of the form 
$${[} r_1 |\cdots | r_{m''}{]} \ot 
   {[}s_1 | \cdots | s_{n''}{]} \quad
   \text{ in } BR\ot BS$$ 
where
$m'' + n'' = m+m' + n+ n'-1$
 and $r_1,\ldots, r_{m''}\in R$, $s_1,\ldots, s_{n''}\in S$. 
We will first 
compute the circle product~(\ref{eqn:circle}), canonically identifying
$f,g,f',g'$ with the corresponding bimodule homomorphisms: 
\begin{align*}
& ((f \boxtimes g)\circ (f'\boxtimes g')) 
  ((1\ot {[} r_1| \cdots | r_{m''} {]}\ot 1) \ot 
   (1\ot {[} s_1| \cdots | s_{n''} {]}\ot 1)) \\
& = (f\boxtimes g) \left(G_{B(R)}\ot F^\scl_{B(S)} + 
    F^\scr_{B(R)}\ot G_{B(S)} \right) \sigma 
 (1 \ot (f'\boxtimes g') \ot 1 ) \Delta^{(2)} \\
 & \hspace{4cm} ((1\ot {[} r_1| \cdots | r_{m''} {]}\ot 1)\ot 
   (1\ot {[} s_1| \cdots | s_{n''}{]}\ot 1)) \\ 
& = (f\boxtimes g) \left(G_{B(R)} \ot F^\scl_{B(S)} + 
     F^\scr_{B(R)}\ot G_{B(S)} \right) \sigma 
 (1 \ot (f'\boxtimes g') \ot 1) (\Delta\ot 1) \\
 & \hspace{1.5cm} \Big( \sum_{j=0}^{m''} \sum_{i=0}^{n''} (-1)^{i(m''-j)} 
  t^{-1}({[}r_{j+1}|\cdots | r_{m''}{]} , {[}s_1| \cdots | s_i{]}) \\
 & \hspace{2cm} 
  ((1\ot {[}r_1 |\cdots | r_j{]} \ot 1)\ot 
   (1\ot {[} s_1 | \cdots | s_i {]}\ot 1) ) \ot_{R\ot^tS}\\ 
&\hspace{2cm}      ( (1\ot {[} r_{j+1} |\cdots | r_{m''}{]}\ot 1)\ot  
   (1\ot  {[}s_{i+1}|\cdots | s_{n''}{]}\ot 1) ) \Big) \\
&=(f\boxtimes g) \left(G_{B(R)} \ot F^\scl_{B(S)} + 
   F^\scr_{B(R)}\ot G_{B(S)} \right)
  \sigma (1 \ot (f'\boxtimes g')\ot 1)\\
&\quad  \Big( \sum_{j=0}^{m''} \sum_{i=0}^{n''} \sum_{p=0}^i \sum_{l=0}^j 
  (-1)^{i(m''-j)}(-1)^{p (j-l)} t^{-1}({[}r_{j+1} |\cdots | r_{m''}{]} 
       , {[}s_1 | \cdots | s_i{]}) t^{-1}
      ({[}r_{l+1} |\cdots | r_j{]} ,{[} s_1| \cdots | s_p{]}) \\
 & \hspace{2cm} 
   ((1\ot {[} r_1 |\cdots | r_l {]}\ot 1)\ot (1\ot {[} s_1| \cdots | s_p{]}
   \ot 1) ) \ot _{R\ot^tS}\\
&\hspace{2cm}   ((1\ot {[} r_{l+1}|\cdots | r_j{]}\ot 1)
   \ot (1\ot {[} s_{p+1}|\cdots | s_i{]}\ot 1) ) 
  \ot_{R\ot^t S}\\ 
  &\hspace{2cm} ((1\ot {[}r_{j+1}|\cdots | r_{m''} {]}\ot 1)
   \ot (1\ot {[} s_{i+1}| \cdots | s_{n''}{]}\ot 1)) \Big). \\
\end{align*}
We wish to apply $(1\ot (f'\boxtimes g')\ot 1)$ to the above sum, 
so it suffices to consider only terms for which 
$m'= j-l$, $n'=i-p$. The  sign involved is thus
$$
   (-1)^{(p+l)(m'+n')} = (-1)^{ (m'+n')(j-m' + i-n')},
$$
and the above becomes
\begin{align*}
& = (f\boxtimes g) \left(G_{B(R)} \ot F^\scl_{B(S)} + 
   F^\scr_{B(R)}\ot G_{B(S)} \right) \sigma \\
 &\quad  \Big( \sum_{j=m'}^{m''} \sum_{i=n'}^{n''}  (-1)^{i (m''-j)} (-1)^{(i-n')m'} 
    (-1)^{(m'+n')(j-m'+i-n')} \\
 &\hspace{1cm} t^{-1}({[}r_{j+1} |\cdots | r_{m''}{]} , 
     {[}s_1 | \cdots | s_i{]})t^{-1} ({[}r_{j-m'+1} |\cdots | r_j{]},
     {[} s_1 | \cdots | s_{i-n'}{]}) \\
&\hspace{1cm}
  ((1\ot {[}r_1|\cdots | r_{j-m'}{]}\ot 1)\ot (1\ot {[} s_1| \cdots | 
   s_{i-n'}{]}\ot 1)) \ot_{R\ot^tS}\\ 
&\hspace{1cm}   \big(f'({[}r_{j-m'+1}| \cdots | r_j{]}) \ot g'({[} s_{i-n'+1}|
   \cdots | s_i{]}) \big) \ot_{R\ot^tS}\\
&\hspace{1cm}   ((1\ot {[} r_{j+1}| \cdots | r_{m''}{]}\ot 1)
   \ot (1\ot {[} s_{i+1}| \cdots | s_{n''}{]}\ot 1)) \Big). \\
\end{align*}
Now apply the module action, 
and apply $\sigma$ (which comes with the sign $(-1)^{(i-n')(m''-j)}$), 
to obtain
\begin{align*}
& = (f\boxtimes g) \left(G_{B(R)} \ot F^\scl_{B(S)} + 
   F^\scr_{B(R)}\ot G_{B(S)} \right) \sigma \\
 &\quad  \Big( \sum_{j=m'}^{m''} \sum_{i=n'}^{n''}  (-1)^{i (m''-j)} (-1)^{(i-n')m'} 
    (-1)^{(m'+n')(j-m'+i-n')} \\
 &\hspace{.5cm} t^{-1}({[}r_{j+1}|\cdots | r_{m''}{]} , 
   {[}s_1 | \cdots | s_i{]}) t^{-1}( {[}r_{j-m'+1}|\cdots | r_j{]} ,
    {[} s_1 | \cdots | s_{i-n'}{]}) \\
&\hspace{.5cm}  
   t(f'({[} r_{j-m'+1}| \cdots | r_j{]}) , {[}s_1| \cdots | s_{i-n'}{]}) \\   
&\hspace{.5cm}
 ((1\ot {[}r_1|\cdots | r_{j-m'}{]}\ot f'({[} r_{j-m'+1}| \cdots | r_j{]})) \ot 
 (1\ot {[} s_1| \cdots | s_{i-n'}{]} \ot g'({[} s_{i-n'+1}|\cdots | s_i{]})) 
   \ot_{R\ot^tS}\\
&\hspace{.5cm} ((1\ot {[} r_{j+1}| \cdots | r_{m''}{]}\ot 1)
  \ot (1\ot {[} s_{i+1}| \cdots | s_{n''}{]}\ot 1)) \Big) \\
& = (f\boxtimes g) \left(G_{B(R)} \ot F^\scl_{B(S)} + 
   F^\scr_{B(R)}\ot G_{B(S)} \right) \\
 &\quad  \Big( \sum_{j=m'}^{m''} \sum_{i=n'}^{n''}  (-1)^{-n' (m''-j)} (-1)^{(i-n')m'} 
    (-1)^{(m'+n')(j-m'+i-n')} \\
&\hspace{.5cm} t^{-1}({[}r_{j+1}|\cdots | r_{m''}{]} , 
  {[}s_1| \cdots | s_i{]})t^{-1} ({[}r_{j-m'+1}|\cdots | r_j {]},
   {[}s_1 | \cdots | s_{i-n'}{]})\\
&\hspace{.5cm} t(f'({[}r_{j-m'+1}| \cdots | r_j{]}) ,
   {[}s_1| \cdots | s_{i-n'}{]}) t({[}r_{j+1}| \cdots | r_{m''}{]}, 
  1\ot {[} s_1| \cdots | s_{i-n'}{]} \ot 
    g'({[} s_{i-n'+1}| \cdots | s_i{]})) \\     
&\hspace{.5cm}
 (1\ot {[}r_1|\cdots | r_{j-m'}{]}\ot f'({[} r_{j-m'+1}| \cdots | r_j{]})) 
 \ot_R (1\ot {[} r_{j+1}| \cdots | r_{m''}{]}\ot 1) \ot \\
 &\hspace{.5cm} 
 (1\ot {[} s_1| \cdots | s_{i-n'}{]}\ot g'({[} s_{i-n'+1}| \cdots | s_i{]})) 
 \ot_S (1\ot {[}s_{i+1}| \cdots | s_{n''}{]}\ot 1) \Big). \\
\end{align*}
Denote by $t^{**}$ the twisting coefficient in the above equation,
which simplifies to:
\[
   t^{**} = t(a', [s_1|\cdots | s_{i-n'}]) 
   t( [r_{j+1} | \cdots | r_{m''}] , b') .
\]

Next we will apply 
$G_{B(R)} \ot F^\scl_{B(S)} +  F^\scr_{B(R)}\ot G_{B(S)}$,
and there are signs associated to each term. 
In applying $G_{B(R)}\ot F^\scl_{B(S)}$, 
necessarily $i=n'$ for the image to be non-zero, and the sign
is $(-1)^{j-m'}$. 
In applying $F^\scr_{B(R)}\ot G_{B(S)}$, 
necessarily $j=m''$ for the image to be non-zero, and
the sign  is $(-1)^{i-n'}$
with an additional sign $(-1)^{j-m'+m''-j}=(-1)^{m''-m'}=(-1)^m$ (as for this
term, we take $m''=m + m'$) since the degree of the map $G_{B(R)}$ is 1. 
The above expression thus becomes
\begin{align*}
&= (f\boxtimes g) \Big(
  \sum_{j=m'}^{m''}   (-1)^{-n' (m''-j)} 
    (-1)^{(m'+n')(j-m')}(-1)^{j-m'}t([r_{j+1}\cdots | r_{m''}] , b') \\
  & \hspace{.5cm} (1\ot {[} r_1|\cdots | r_{j-m'} | f'
   ( {[} r_{j-m'+1}|\cdots | r_j{]}) | r_{j+1}
  |\cdots | r_{m''} {]}\ot 1) \ot 
  ( g'({[} s_1|\cdots | s_i{]} )\ot [s_{i+1} | \cdots | s_{n''}{]}\ot 1) \\
& \hspace{.5cm}+\sum_{i=n'}^{n''}     (-1)^{(i-n')m'} 
    (-1)^{(m'+n')(m+i-n')}(-1)^{i-n'}(-1)^{m} t(a', 
   [s_1 | \cdots | s_{i-n'}]) \\
&  \hspace{.5cm} (1\ot {[} r_1| \cdots | r_{m}] \ot f'({[} r_{m+1}|\cdots | 
   r_{m''}]))\ot (1\ot   
 {[} s_1|\cdots | s_{i-n'} | g'({[} s_{i-n'+1}
 |\cdots | s_i{]})| s_{i+1}|\cdots | s_{n''}{]}\ot 1) \Big) \\
& =   \sum_{j=m'}^{m''}   (-1)^{-n' (m''-j)} 
    (-1)^{(m'+n')(j-m')}(-1)^{j-m'}\\
  & \hspace{.5cm} f({[} r_1|\cdots | r_{j-m'} | f'
   ({[} r_{j-m'+1}| \cdots | r_j{]})| \hat{b}r_{j+1}
  |\cdots | \hat{b}r_{m''} {]}) \ot 
   \hat{a}'( g'({[}s_1|\cdots | s_i{]}))
g( {[} s_{i+1}| \cdots | s_{n''}{]})\\
& \hspace{.5cm}+\sum_{i=n'}^{n''}     (-1)^{(i-n')m'} 
    (-1)^{(m'+n')(m+i-n')}(-1)^{i-n'}(-1)^{m} \\
&  \hspace{.5cm}f({[} r_1|\cdots | r_{m} {]} )
  \hat{b}( f'({[}r_{m+1}|\cdots | r_{m''}{]}))   \ot 
  g({[}\hat{a}'s_1|\cdots | \hat{a}'s_{i-n'} | g'({[} s_{i-n'+1}
  |\cdots | s_i{]})| s_{i+1} |\cdots | s_{n''}{]}). \\
\end{align*}
We wish to rewrite the sums. The first sum involves $f\circ_t f'$, in
which the term indexed by $j$ has a sign $(-1)^{(m'-1)(j-m')}$.
The second sum involves $g\circ_t g'$, in which the term indexed by $i$
has a sign $(-1)^{(n'-1)(i-n')}$. Accommodating these signs and rewriting, the 
above is equal to 
$$
   (-1)^{n'(m+n-1)} (f\circ_t f')\boxtimes (g ' \smile_t g)
 + (-1)^{m(n'-1)}(f\smile_t f')\boxtimes (g \circ_t g')$$
applied to the input. 
By Proposition~\ref{R-twisted-comm-prop}, reversing the order of
$g'$, $g$ in the first term, we finally find that 
$(f\boxtimes g )\circ (f'\boxtimes g')$ is equal to 
\[
   (-1)^{n'(m-1)} t^{-1}(a,b') (f\circ_t f') \boxtimes (g\smile_t g') +
  (-1)^{m(n'-1)} (f\smile_t f') \boxtimes (g\circ_t g') .
\]
Similarly, 
\begin{align*} 
&(f' \boxtimes\ g')\circ (f\boxtimes g) \\
& \hspace{2cm}=  (-1)^{n(m'+n'-1)} (f'\circ_t f)\boxtimes (g  \smile_t g')
 + (-1)^{m'(n-1)}(f'\smile_t f)\boxtimes (g' \circ_t g). \\
\end{align*} 
By Proposition~\ref{R-twisted-comm-prop}, reversing the order of $f'$, $f$
in the second term, we obtain 
\[
   (-1)^{n(m'+n'-1)} (f'\circ_t f)\boxtimes (g\smile_t g') 
   + (-1)^{m' (m+n-1)}t^{-1}(a',b) (f\smile_t f') \boxtimes
   (g'\circ_t g) .
\]
We thus have found that 
\[
\begin{aligned}
& [ f\boxtimes g , f'\boxtimes g'] \\
& = (f\boxtimes g ) \circ (f'\boxtimes g') - (-1)^{(m+n-1)(m'+n'-1)} 
    (f'\boxtimes g')\circ (f\boxtimes g) \\
& = (-1)^{n'(m-1)} t^{-1}(a,b') (f\circ_t f')\boxtimes (g\smile_t g') 
     + (-1)^{m(n'-1)} (f\smile_t f') \boxtimes (g\circ_t g') \\
& \quad - (-1)^{(m'+n'-1)(m-1)} (f'\circ_t f)\boxtimes (g\smile_t g')
  - (-1)^{(n'-1)(m+n-1)} t^{-1}(a',b) (f\smile_t f')
  \boxtimes  (g'\circ_t g) .
\end{aligned}
\]

We are now ready to compare with the formula given at the start of this
section, for $[f\ot g , f'\ot g']$ on the left side of the claimed
isomorphism in the theorem statement. 
This is
\[
\begin{aligned}
&(-1)^{(m-1)n'} [f,f']_t \ot (g\smile_t g') + (-1)^{m(n'-1)} (f\smile_t f')
   \ot [g,g']_t \\
& = (-1)^{(m-1)n'} \big(t^{-1}(a,b') f\circ_t f'  - (-1)^{(m-1)(m'-1)}
    f'\circ_t f \big) \ot (g\smile_t g') \\
& \quad +(-1)^{m(n'-1)} (f\smile_t f') \ot \big( g\circ_t g'
    - (-1)^{(n-1)(n'-1)} t^{-1}(a',b) g'\circ_t g\big) .
\end{aligned}
\]
Comparison with the previous calculation shows that 
\[
   \phi([f\ot g,f'\ot g']) 
   = [ f\boxtimes g , f'\boxtimes g'] = 
  [ \phi(f\ot g), \phi(f'\ot g')] .\qedhere
\]
\end{proof} 

\begin{remark}
 Bergh and Oppermann's \cite[Theorem 4.6]{BO} is a special case of our results. Their result is recovered by restricting the isomorphism of Theorem \ref{vect-space-theorem} to the subspace graded by $\ker t(-,B)\times \ker t(A,-)\subseteq A\times B$. This identifies exactly the part of $\bigoplus \ \HH^*(R,R_{\hat{b}})^a\ot \HH^*(S,{}_{\hat{a}}S)^b$ on which the twists act trivially.
The Gerstenhaber bracket on $\HH^*(R\otimes^tS)$ was partially computed by Grimley,   Nguyen,  and  the second author in \cite[Theorem 6.3]{GNW} in terms of Bergh and Oppermann's decomposition. It is shown in loc.~cit.~that on the untwisted part of $\HH^*(R\otimes^tS)$ (i.e.~the restriction to $\ker t(-,B)\times \ker t(A,-)$) the bracket can be computed explicitly using the bracket on the two factors. Theorem \ref{thm:G-alg} above extends this to all of Hochschild cohomology, explaining how to account for the twists.
\end{remark}

\section{Quantum complete intersections and iterated twisted products}\label{sec:examples}

In this section we present a series of examples, namely the quantum complete 
intersections, as an application of our main theorem.
We also explain how to extend the theorem to the case of iterated twisted tensor product algebras.

We begin with the case of two indeterminates, 
continuing from Example \ref{exqci} above. 
Fix $q\in k^\times$. We consider the Hochschild cohomology of the quantum complete intersection $\Lambda_q(m,n)=k[x]/(x^m)\otimes^tk[x]/(x^n)$. The two factors are graded by $\Z$, generated in degree $1$, and  $t:\Z\times\Z\to k^\times$ is the bicharacter $t(a,b) = q^{ab}$. 
The Hochschild cohomology was computed for $m=n=2$ in \cite{BGMS}
, and then later for all $m$ and $n$ in \cite[Theorem 3.3]{BE}. In \cite{EF} the cup product was computed when $m=n$ is the order of $q$ in $k^\times$. The Gerstenhaber brackets were computed fully in \cite[Section 5]{GNW} in the case $m=n=2$. 

All of these results can be recovered using the main theorem of this paper, but for the sake of novelty we deal with a new case here, and use Theorem \ref{thm:G-alg} to  calculate the Gerstenhaber brackets on  $\HH^*(\Lambda_m\otimes^t\Lambda_n)$ for all $m$ and $n$. There are many cases to consider, depending on the characteristic of $k$ and the order of $q$ in $k^\times$, and for the sake of brevity we will only consider here the case that $q$ has infinite order. Let us emphasize however that all of the cases can be dealt with readily (one only needs the  patience to write them all out).

\begin{theorem}\label{qcibracketthm}
If $q$ is not a root of unity then as an algebra
\[
\HH^*(\Lambda_q(m,n))\cong  k[U]/(U^2) \times_k {\textstyle \bigwedge^*_k}(V,W) , 
\]
i.e.~it is the fiber product of $k[U]/(U^2)$, $U$ in degree $0$, with an exterior algebra $\bigwedge^*_k(V,W)$, $V,W$ in degree $1$. The bracket is given by
\[
[V,U] = (m-1) U,\quad [W,U] = (n-1) U, \quad \text{and }\ [V,W]=0.
\]
\end{theorem}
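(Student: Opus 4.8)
The plan is to apply Theorem~\ref{thm:G-alg} directly, so that, writing $R=k[x]/(x^m)$ and $S=k[y]/(y^n)$, everything is reduced to understanding the two orbit Hochschild cohomologies $\bigoplus_b \HH^*(R,R_{\hat b})$ and $\bigoplus_a \HH^*(S,{}_{\hat a}S)$, as bigraded objects, together with their twisted products $\smile_t$ and circle products $\circ_t$. Both $R$ and $S$ are finite dimensional, so the finiteness hypotheses of Section~\ref{sec:main} hold. To compute $\HH^*(R,R_{\hat b})$ I would use the standard $2$-periodic bimodule resolution of $k[x]/(x^m)$, whose differentials are multiplication by $u=x\otimes 1-1\otimes x$ and $v=\sum_{i=0}^{m-1}x^i\otimes x^{m-1-i}$ and whose successive free generators sit in internal degrees $0,1,m,m+1,2m,2m+1,\dots$. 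Applying $\Hom_{R^\ev}(-,R_{\hat b})$ turns the two differentials into multiplication by $(1-q^b)\,x$ and by $\big(\sum_{j=0}^{m-1}q^{jb}\big)x^{m-1}$ on $R$, since $\hat b$ scales $R^a$ by $q^{ab}$.

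I would then separate the cases $b=0$ and $b\neq 0$, using that $q^b=1$ if and only if $b=0$ because $q$ is not a root of unity. When $b\neq 0$ both $1-q^b$ and $\sum_{j=0}^{m-1}q^{jb}=(q^{mb}-1)/(q^b-1)$ are nonzero (here $mb\neq 0$), and the cochain complex collapses to a single class, namely $x^{m-1}$ in $\HH^0(R,R_{\hat b})^{m-1}$. When $b=0$ one recovers the ordinary Hochschild cohomology of $k[x]/(x^m)$; a degree-$0$ bimodule map out of the degree-$p$ generator lands in internal degree $0,1,m,\dots$, which lies in $R$ only for $p=0,1$, so its internal-degree-$0$ part is two dimensional, spanned by the unit in cohomological degree $0$ and the Euler (degree) derivation $\theta_R$ in cohomological degree $1$ (the characteristic of $k$ is irrelevant here). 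The same analysis applies to $S$. Feeding this into Theorem~\ref{vect-space-theorem}, the $R$-computation allows only $a=0,\,b=0$ or $a=m-1,\,b\neq 0$, while the $S$-computation allows only $a=0,\,b=0$ or $b=n-1,\,a\neq 0$; since $m-1,n-1\neq 0$ the two couple only at the bidegrees $(0,0)$ and $(m-1,n-1)$, producing exactly the five classes $1$, $V=\theta_R\otimes 1$, $W=1\otimes\theta_S$, $VW=\theta_R\otimes\theta_S$ in bidegree $(0,0)$ and $U=x^{m-1}\otimes y^{n-1}$ in bidegree $(m-1,n-1)$.

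For the ring structure I would apply Theorem~\ref{cup-prod-thm} and the vanishing criterion of Corollary~\ref{degenerate-cor}. Because $U$ has bidegree $(m-1,n-1)$, the product $U\smile U$ is governed by $t(m-1,n-1)=q^{(m-1)(n-1)}$, and $q^{2(m-1)(n-1)}\neq 1$ forces $U^2=0$. The cross products $U\smile V$ and $U\smile W$ are not killed by Corollary~\ref{degenerate-cor}, but through formula~(\ref{untwistedcupdef}) they factor through $x^{m-1}\smile_t\theta_R\in\HH^1(R,R_{\widehat{n-1}})$ and $y^{n-1}\smile_t\theta_S\in\HH^1(S,{}_{\widehat{m-1}}S)$, both of which are zero since the orbit cohomology is concentrated in cohomological degree $0$ for nonzero twist. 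Similarly $V^2$ and $W^2$ land in $\HH^2(R,R)^0=0$ and $\HH^2(S,S)^0=0$, whereas $VW$ is a nonzero class in $\HH^1(R,R)^0\otimes\HH^1(S,S)^0$. Thus $1,V,W,VW$ span the exterior algebra $\bigwedge^*_k(V,W)$, while $U$ squares to zero and its products with $V,W,VW$ all vanish, so the whole ring is the fiber product $k[U]/(U^2)\times_k\bigwedge^*_k(V,W)$.

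Finally I would compute the brackets from the formula in Theorem~\ref{thm:G-alg}. The bracket $[V,W]$ vanishes because each of its two terms carries a factor $[\theta_R,1]_t$ or $[1,\theta_S]_t$, a bracket against a unit class. For $[V,U]$ the term containing $[1,y^{n-1}]_t$ drops out, leaving $[V,U]=[\theta_R,x^{m-1}]_t\otimes y^{n-1}$. Since $x^{m-1}$ is a $0$-cochain the circle product $x^{m-1}\circ_t\theta_R$ has no slot and vanishes, so $[\theta_R,x^{m-1}]_t=\theta_R\circ_t x^{m-1}$; inserting the $0$-cochain $x^{m-1}$ into the single slot of the degree derivation gives $\theta_R(x^{m-1})=(m-1)x^{m-1}$, whence $[V,U]=(m-1)U$ and, symmetrically, $[W,U]=(n-1)U$. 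As the Gerstenhaber bracket is a biderivation for $\smile$, these values on the generators $U,V,W$, together with the vanishing self-brackets, determine it completely. I expect the only real difficulty to be bookkeeping: carrying the twist scalars $t^{\pm 1}(a,b')$, the Koszul signs, and the three gradings consistently through Theorems~\ref{cup-prod-thm} and~\ref{thm:G-alg}. The saving grace is that every genuinely nonzero contribution comes from a single clean source, the degree derivation evaluated on the socle element $x^{m-1}$ (resp.\ $y^{n-1}$).
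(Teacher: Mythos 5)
Your proposal is correct and follows essentially the same route as the paper's own proof: the same $2$-periodic bimodule resolution twisted by $\hat{b}$, the same case split $b=0$ versus $b\neq 0$ using that $q$ is not a root of unity, the same five surviving classes in the decomposition of Theorem~\ref{vect-space-theorem}, and the same bracket computation via Theorem~\ref{thm:G-alg}, with the terms involving a bracket against the unit and the slotless circle products $x^{m-1}\circ_t \theta_R$ vanishing, leaving the Euler derivation evaluated on the socle. The only differences are cosmetic: you verify the ring structure explicitly via Corollary~\ref{degenerate-cor} and formula~(\ref{untwistedcupdef}) where the paper cites \cite{BE}, and you shortcut the $b=0$ case to its internal-degree-zero part (correctly noting characteristic independence) rather than recording the full twisted cohomology as the paper does.
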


\begin{proof}
The algebra structure is known \cite[Theorem 3.3]{BE}, but we give the full calculation to demonstrate Theorem \ref{cup-prod-thm}.

Denote $\Lambda(m)=k[x]/(x^m)$ and take $b\in \Z$. We need to compute $\HH^*(\Lambda(m),\Lambda(m)_{\hat{b}})$. There is a well-known $2$-periodic bimodule resolution of $\Lambda(m)$:
\[
\cdots\xrightarrow{x\otimes 1-1\otimes x}\Lambda(m)^{\ev}[-m]\xrightarrow{\sum_{i=0}^{m-1} x^{m-i-1}\otimes x^{i}}\Lambda(m)^{\ev}[-1]\xrightarrow{x\otimes 1-1\otimes x}\Lambda(m)^{\ev},
\]
where $[i]$ denotes the shift in grading by $i$. Applying $\Hom_{\Lambda(m)}(-,\Lambda(m)_{\hat{b}})$ produces the complex
\[
\cdots\xleftarrow{x(1-q^{b})}\Lambda(m)[m]\xleftarrow{x^{m-1} (\sum_{i=0}^{m-1}q^{ib})}\Lambda(m)[1]\xleftarrow{x(1-q^{b})}\Lambda(m).
\]
From this one can read off the cohomology (assuming that $q^b\neq 1$ when $b\neq 0$):
\begin{equation}
    \HH^i(\Lambda(m),\Lambda(m)_{\hat{b}})=
    \begin{cases}
    \Lambda(m) & \text{if }i=0, b=0\\
    (x^{m-1}) & \text{if }i=0, b\neq 0\\
    (\Lambda(m)/(mx^{m-1}))[\frac{i}{2}m]& \text{if }i>0 \text{ is even}, b=0\\
    {\rm Ann}_{\Lambda(m)}(mx^{m-1})[\frac{i-1}{2}m+1]& \text{if }i>0 \text{ is odd}, b=0\\
    0& \text{if }i>0, b\neq0.\\
    \end{cases}
\end{equation}
The computation for $\HH^j(\Lambda(n),{}_{\hat{a}}\Lambda(n))$ is essentially the same. When one comes to combine these vector spaces according to the decomposition of Theorem \ref{vect-space-theorem}, one finds that almost all of the terms have at least one of the two factors equal to zero. The only surviving terms are
\begin{align*}
    \HH^0(\Lambda_q(m,n))^{0,0}   \ = \ \ & k(1\otimes 1) & \ \\
    \HH^0(\Lambda_q(m,n))^{m-1,n-1} \   = \ \  & k(x^{m-1}\otimes x^{n-1}) \\
    \HH^1(\Lambda_q(m,n))^{0,0} \   = \ \  & k(x[1]\otimes1) + k(1 \otimes x[1]) \\
    \HH^2(\Lambda_q(m,n))^{0,0} \ = \ \ & k (x[1] \otimes x[1]).
\end{align*}
This matches the desired result if we set $U=x^{m-1}\otimes x^{n-1}$ and $V=x[1]\otimes1$ and $W= 1 \otimes x[1]$.

Lastly, $[V,W]=0$ for degree reasons, and using Theorem \ref{thm:G-alg},
\begin{align*}
    [V,U] & =  [x[1]\otimes1,x^{m-1}\otimes x^{n-1}]  \\
    & = [x[1],x^{m-1}]_t\otimes x^{n-1} - x^{m-1}\otimes[x^{n-1},1]_t\\
    & = (m-1)x^{m-1}\otimes x^{n-1} -0 ,
\end{align*}
where as usual, bracketing a degree~1 element with a degree~0 element
amounts to applying the corresponding derivation to the algebra element. 
Similarly $[W,U]=(n-1)x^{m-1}\otimes x^{n-1}$, and this completes the proof.
\end{proof}

Now we point out how the main result of this paper can be extended to the case of iterated twisted tensor products. Suppose we have abelian groups $A_1,...,A_n$ and a collection $t$  of bicharacters $t_{ij}:A_i\times A_{j}\to k^\times$ for $i<j$. If $R_1,...,R_n$ are algebras graded by  $A_1,...,A_n$ respectively, we can form the twisted tensor product $R_1\otimes^t\cdots \otimes^tR_n$. As a graded vector space this is $R_1\otimes \cdots \otimes R_n$, and the multiplication is determined by $(1\otimes\cdots \otimes r_i\otimes\cdots \otimes 1) \cdot (1\otimes\cdots \otimes r_j\otimes\cdots \otimes 1)= $
\begin{align*}
    (1\otimes\cdots \otimes r_ir_j\otimes\cdots \otimes 1) \quad & \text{if } i=j , \\
    (1\otimes\cdots \otimes r_i\otimes\cdots \otimes r_j\otimes\cdots \otimes 1) \quad & \text{if } i<j  , \\
    t_{ji}(r_j,r_i)(1\otimes\cdots \otimes r_j\otimes\cdots \otimes r_i\otimes\cdots \otimes 1) \quad& \text{if } i>j.
\end{align*}

\begin{corollary}\label{coriterated}
Let $R_1,...,R_n$ be algebras graded by abelian groups $A_1,...,A_n$ respectively, each satisfying the finiteness conditions from Section \ref{sec:main}, and let $t=\{t_{ij}\}$ be a collection of bicharacters as above. The Hochschild cohomology of $R=R_1\otimes^t\cdots \otimes^tR_n$ can be decomposed
\[
   \HH^*(R,R)\cong \bigoplus_{a_1,\ldots,a_n} \bigotimes_{i=1}^{n}\HH^*(R_i, {}_{\hat{a}_{1}\cdots\hat{a}_{i-1}}(R_i)_{\hat{a}_{i+1}\cdots\hat{a}_{n}})^{a_i}.
\]
The cup product and Gerstenhaber bracket on $\HH^*(R,R)$ can be computed from that of the factors in this decomposition, in a similar way to Theorems \ref{cup-prod-thm} and  \ref{thm:G-alg}.
\end{corollary}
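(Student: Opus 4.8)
The plan is to deduce Corollary~\ref{coriterated} from the two-factor case by induction on $n$, rather than by re-running the chain-level calculations from scratch. First I would set up the inductive step by observing that the iterated twisted tensor product is associative in a suitable sense: grouping the first $n-1$ factors together, I would write $R = (R_1\otimes^t\cdots\otimes^t R_{n-1})\otimes^{t'} R_n$, where $R' := R_1\otimes^t\cdots\otimes^t R_{n-1}$ carries the $A_1\oplus\cdots\oplus A_{n-1}$-grading and $t'$ is the bicharacter $(a_1,\ldots,a_{n-1})\times a_n \mapsto \prod_{i<n} t_{in}(a_i,a_n)$ induced on the product grading group by the collection $\{t_{in}\}$. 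The key compatibility to check is that this iterated product really is a two-factor twisted tensor product for the bicharacter $t'$, and that $R'$ again satisfies the finiteness conditions of Section~\ref{sec:main} (degreewise finite-dimensionality is stable under twisted tensor product, so this is routine).

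Next I would apply Theorem~\ref{vect-space-theorem} (together with Theorems~\ref{cup-prod-thm} and~\ref{thm:G-alg} for the algebra and Gerstenhaber structure) to this two-factor decomposition, yielding
\[
   \HH^*(R)\cong\bigoplus_{a'\in A',\ a_n\in A_n}\HH^*(R',R'_{\hat{a}_n})^{a'}\otimes\HH^*(R_n,{}_{\widehat{a'}}R_n)^{a_n},
\]
where $\widehat{a'}$ denotes the automorphism of $R_n$ induced by $a'=(a_1,\ldots,a_{n-1})$ through $t'$, and $\hat{a}_n$ the automorphism of $R'$ induced by $a_n$. The point is then to identify the two factors with the inductive pattern. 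The automorphism of $R_n$ attached to $a'$ factors as the composite $\hat{a}_1\cdots\hat{a}_{n-1}$ because $t'$ is built multiplicatively from the $t_{in}$; this gives exactly the left-twist ${}_{\hat{a}_1\cdots\hat{a}_{n-1}}R_n$ appearing in the statement. For the first factor I would apply the inductive hypothesis to $\HH^*(R',R'_{\hat{a}_n})$, which requires the twisted (non-trivial coefficient) form of the decomposition rather than the bare $\HH^*(R',R')$. Concretely, twisting $R'$ by $\hat{a}_n$ amounts to twisting each $R_i$ on the right by the component automorphism induced by $a_n$, so the inductive decomposition of $\HH^*(R',R'_{\hat{a}_n})$ distributes this as a right-twist $(R_i)_{\hat{a}_{i+1}\cdots\hat{a}_{n-1}\hat{a}_n}$ on each inner factor. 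Reassembling and collecting the left- and right-twists on each $R_i$ produces the symmetric expression ${}_{\hat{a}_1\cdots\hat{a}_{i-1}}(R_i)_{\hat{a}_{i+1}\cdots\hat{a}_n}$ claimed in the corollary.

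The main obstacle I anticipate is bookkeeping rather than anything conceptual: one must verify that the bicharacter $t'$ on the grouped factors reproduces precisely the twist-by-automorphism data in the statement, and that the \emph{module-coefficient} version of the main theorem (with coefficients $M=R'_{\hat{a}_n}$ and $N={}_{\widehat{a'}}R_n$) is available for the inductive step. The excerpt only states Theorem~\ref{vect-space-theorem} with general coefficients $M,N$, so the induction should be phrased for the twisted-coefficient statement throughout, carrying an arbitrary bimodule coefficient along; otherwise the inductive hypothesis does not close. The cup product and bracket claims then follow because each application of Theorems~\ref{cup-prod-thm} and~\ref{thm:G-alg} matches the structures on the nose, and composing the isomorphisms over the induction respects these structures by functoriality; the only genuine care needed is tracking the various twisting scalars $t_{ij}$ through the nested $\smile_t$, $\circ_t$, and $[\,,\,]_t$ operations, which is precisely the kind of sign-and-scalar verification already carried out in the two-factor proofs and which I would assert follows by the same mechanism rather than reproduce in full.
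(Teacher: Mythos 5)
Your proposal is correct and is essentially the paper's intended argument: the paper itself skips the proof, describing it as a simple induction applying Theorem~\ref{vect-space-theorem} repeatedly with two factors grouped at a time (as in \cite[Lemma 5.1]{BO}), which is exactly your setup with $R=(R_1\otimes^t\cdots\otimes^t R_{n-1})\otimes^{t'}R_n$. You correctly identify the one genuine subtlety---that the inductive hypothesis must be run with twisted bimodule coefficients such as $R'_{\hat{a}_n}$, which is why the main theorem is stated for general $M$ and $N$---so your write-up in fact supplies more detail than the paper does.
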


We leave the last statement to be interpreted properly by the interested reader. We also skip the proof, since it is a simple induction applying Theorem  \ref{vect-space-theorem} repeatedly (using the fact that $R_1\otimes^t\cdots \otimes^tR_n$ can be viewed as an iterated twisted tensor product with two factors at a time, in a similar way to \cite[Lemma 5.1]{BO}).

As an application of Corollary \ref{coriterated} we compute the Hochschild cohomology of quantum complete intersections with more than two indeterminates.
This has yet wider applications; for example, many Nichols algebras arising in the theory of
pointed Hopf algebras have associated graded algebras that are quantum complete
intersections, and this structure can have important homological implications. 

Let $q$ be a collection of elements $q_{ij}$ in $k^\times$ for $i<j$, and set $t_{ij}:\Z\times \Z\to k^\times$ to be the bicharacter $t(a,b)=q_{ij}^{ab}$. Extending our earlier notation from
the case of two indeterminates to many, we set
\[
\Lambda_q(m_1,\ldots,m_n)=\Lambda(m_1)\otimes^t\cdots \otimes^t \Lambda(m_n).
\]
\begin{theorem}\label{iteratedqcithm}
Assume that the scalars $q_{ij}$ freely generate a free abelian subgroup of $k^\times$. Then as an algebra
\[
\HH^*(\Lambda_q(m_1,\ldots,m_n))\cong  k[U]/(U^2) \times_k {\textstyle \bigwedge^*_k}(V_1,\ldots,V_n) ,
\]
i.e.~it is the fiber product of $k[U]/(U^2)$, $U$ in degree $0$, with an exterior algebra $\bigwedge^*_k(V_1,\ldots,V_n)$, $V_i$ in degree $1$. The bracket is given by
\[
[V_i,U] = (m_i-1) U, \quad \text{and }\ [V_i,V_j]=0.
\]
\end{theorem}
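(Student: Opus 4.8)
The plan is to apply Corollary \ref{coriterated} to the iterated twisted tensor product $\Lambda_q(m_1,\ldots,m_n)$ and then analyze which summands survive. First I would record the single-factor computation already performed in the proof of Theorem \ref{qcibracketthm}: for each $\Lambda(m_i)=k[x]/(x^{m_i})$, using the $2$-periodic bimodule resolution and twisting by an automorphism $\hat{c}$ (where $c\in\Z$ encodes the accumulated twist from all the other factors), the cohomology $\HH^*(\Lambda(m_i),{}_{\hat{c}}(\Lambda(m_i))_{\hat{c}})$ is nonzero only in the two regimes $c=0$ (giving the full $\Z$-graded cohomology of the untwisted self-bimodule) and the isolated degree-$0$ socle term $(x^{m_i-1})$ when $c\neq 0$. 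The crucial input is the freeness hypothesis on the $q_{ij}$: it guarantees that the relevant scalar $q^{c}\neq 1$ whenever the net exponent $c$ is nonzero, so the twisted cohomology genuinely collapses exactly as in the two-variable case.

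Next I would carry out the combinatorial bookkeeping of the direct sum in Corollary \ref{coriterated}. For a fixed multidegree $(a_1,\ldots,a_n)$, the $i$-th tensor factor is $\HH^*(\Lambda(m_i),{}_{\hat{a}_1\cdots\hat{a}_{i-1}}(\Lambda(m_i))_{\hat{a}_{i+1}\cdots\hat{a}_n})^{a_i}$, and the automorphism twisting this factor is determined by the scalar $\prod_{j\neq i}q_{ij}^{?}$ read off from the $a_j$. The key observation is that a summand contributes only if every factor is nonzero simultaneously. By the single-factor analysis, each factor forces either a trivial net twist (pushing us toward the all-zero multidegree) or the socle element $x^{m_i-1}$ in $A_i$-degree $m_i-1$ and homological degree $0$. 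Using freeness of the $q_{ij}$ one checks, exactly as in the surviving-terms list of Theorem \ref{qcibracketthm}, that the only nonvanishing summands are: the trivial one $k(1\otimes\cdots\otimes 1)$; the top socle $U=x^{m_1-1}\otimes\cdots\otimes x^{m_n-1}$ in degree $0$ and multidegree $(m_1-1,\ldots,m_n-1)$; and the $n$ classes $V_i=1\otimes\cdots\otimes x[1]\otimes\cdots\otimes 1$ in homological degree $1$ and multidegree $0$. I would then invoke Theorem \ref{cup-prod-thm} (in its iterated form) to read off the product: the $V_i$ anticommute and square to zero because each lives in homological degree $1$ with a single factor carrying a degree-$1$ class; any product $V_iU$ vanishes because it would require a nonzero cohomology class in a twisted factor of positive homological degree, which does not exist. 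This yields precisely the fiber product $k[U]/(U^2)\times_k\bigwedge^*_k(V_1,\ldots,V_n)$.

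Finally I would compute the brackets using the iterated version of Theorem \ref{thm:G-alg}, exactly mirroring the two-variable computation in Theorem \ref{qcibracketthm}. The bracket $[V_i,V_j]=0$ holds for degree reasons, since both are degree-$1$ classes supported on distinct single factors and the cross terms in the Manin-style bracket formula land in vanishing components. For $[V_i,U]$, the only nonvanishing contribution comes from the factor where $V_i$ and $U$ both act nontrivially, namely the $i$-th, and there the bracket reduces to $[x[1],x^{m_i-1}]_t$ on $\Lambda(m_i)$, which (as a degree-$1$ class bracketed with a degree-$0$ element, i.e.\ the associated derivation applied to $x^{m_i-1}$) gives $(m_i-1)x^{m_i-1}$; the remaining tensor factors supply their socle elements unchanged, producing $[V_i,U]=(m_i-1)U$.

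The main obstacle I anticipate is the vanishing analysis of the twisted single-factor cohomologies under the accumulated automorphisms $\hat{a}_1\cdots\hat{a}_{i-1}$ and $\hat{a}_{i+1}\cdots\hat{a}_n$: one must verify that the relevant net scalar equals $1$ only in the degenerate cases already identified, and this is exactly where the freeness of the subgroup generated by the $q_{ij}$ in $k^\times$ is indispensable. Once that collapse is established, the surviving-term count and the algebra and bracket structures follow by direct transcription of the two-variable argument, so the real content is confined to the representation-theoretic vanishing together with the careful multidegree bookkeeping.
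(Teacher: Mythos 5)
Your proposal is correct and follows exactly the route the paper intends: the paper omits the proof of Theorem \ref{iteratedqcithm}, stating only that it is ``very similar'' to that of Theorem \ref{qcibracketthm} via Corollary \ref{coriterated}, with $U=x^{m_1-1}\otimes\cdots\otimes x^{m_n-1}$ and $V_i=1\otimes\cdots\otimes x[1]\otimes\cdots\otimes 1$, which is precisely the single-factor computation, freeness-driven vanishing, multidegree bookkeeping, and bracket transcription you carry out. One small refinement: the freeness hypothesis is needed not only to ensure the net scalar $\lambda\neq 1$ for nonzero accumulated twist, but also that $\lambda$ is not a nontrivial $m_i$-th root of unity, so that the coefficient $\sum_{l=0}^{m_i-1}\lambda^l$ in the middle differential of the periodic resolution is nonzero---exactly the role played by ``$q$ not a root of unity'' in the two-variable case.
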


The proof is very similar to that of Theorem \ref{qcibracketthm}, and so we will omit it. In the notation there, $U$ corresponds to $x^{m_1-1}\otimes\cdots \otimes x^{m_n-1}$ and each $V_i$ corresponds to $1\otimes \cdots \otimes x[1]\otimes\cdots \otimes 1$
(the $x[1]$ in the $i$th position).

In general, the description of Hochschild cohomology of $\Lambda_q(m_1,\ldots,m_n)$
depends on what kind of subgroup of $k^\times$ the scalars $q_{ij}$ generate. Various other cases are easy to compute as well, for example when all of the $q_{ij}$ are equal.

Bergh and Oppermann computed a \emph{part} of Hochschild cohomology in the case of many indeterminates, with its algebra structure. The authors of \cite{BE,BGMS,EF} do not treat the many indeterminate case; it likely would not have been feasible with the methods available. 
Thus Theorem \ref{iteratedqcithm} illustrates the usefulness of our main theorem.

\section{Skew group algebras}\label{sec:examples2}

In this section we give another large class of examples to which
our main theorem applies, namely skew group algebras for which
the group is abelian.
We treat first the case of a symmetric algebra with group action.

Assume that $k$ is algebraically closed of characteristic $0$, and let $G$ be a finite abelian group acting on a finite dimensional vector space $V$. This action extends naturally to an action on the symmetric algebra  $S(V)$. We can form the twisted group algebra $S(V)\rtimes G$. As a graded vector space this is the tensor product $S(V)\otimes kG$, and the multiplication is given by
\[
(f\otimes g)\cdot (f'\otimes g')= f g(f') \otimes gg' .
\]
Let us explain how to see $S(V)\rtimes G$ as a bicharacter twisted tensor product. Recall that $\widehat{G}=\Hom(G,k^\times)$ denotes the group of characters of $G$. There is a natural bicharacter $t:\widehat{G}\times G\to k^\times$, $t(\phi, g)=\phi(g)$. Given $\phi\in \widehat{G}$ we consider the eigenspace
 \[
 S(V)^{\phi}=\big\{ f : g(f)=\phi(g) f \text{ for all } g\in G\big\} .
 \]
Since $k$ is algebraically closed of characteristic $0$ we have a decomposition
\[
S(V)=\bigoplus_{\phi \in \widehat{G}}S(V)^\phi.
\]
In fact, this makes $S(V)$ into a $\widehat{G}$-graded algebra. The group algebra $kG$ is trivially $G$-graded. Putting all this structure together, we find that
\[
S(V)\rtimes G=S(V)\otimes^t kG.
\]
Using this observation we can recover---in the abelian case---a result Buchweitz, proved independently in work of Farinati~\cite{F} and 
Ginzburg and Kaledin~\cite{GK}. This can also be seen as resulting from a special case of a spectral sequence of Negron \cite{CN} (which degenerates for us, by the assumption on $k$).

\begin{corollary}\label{skewcor}
There is an isomorphism of Gerstenhaber algebras
\begin{align*}
    \HH^*(S(V)\rtimes G) & \cong\bigoplus_{g} \HH^*(S(V),S(V)_{\hat{g}})^G\\
    & \cong \HH^*(S(V),S(V)\rtimes G)^G . 
\end{align*}
\end{corollary}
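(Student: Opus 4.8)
The plan is to recognize Corollary~\ref{skewcor} as a direct application of the main theorem to the specific twisted tensor product $S(V)\otimes^t kG$ that was just identified. First I would set up the two factors in the notation of Theorem~\ref{thm:G-alg}: take $R = S(V)$ graded by $A = \widehat{G}$, and $S = kG$ graded by $B = G$, with the bicharacter $t(\phi,g) = \phi(g)$. Before invoking the theorem I must check the finiteness conditions from Section~\ref{sec:main}. For $kG$ this is immediate since $G$ is finite. For $S(V)$, the eigenspace grading by $\widehat{G}$ is the relevant one; each graded piece $S(V)^\phi$ is infinite dimensional, so I would instead observe that we should use the refined $\Z \times \widehat{G}$ grading coming from the polynomial degree together with the eigenspace decomposition. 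Each such bigraded component is finite dimensional, and $n$-fold sums have finite fibers because $\widehat{G}$ is finite and the polynomial degree is additive and bounded below. This is the first place care is needed.

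Next I would compute the two factors appearing in the decomposition of Theorem~\ref{thm:G-alg}. The key simplification is that $kG$ is separable: since $k$ is algebraically closed of characteristic~$0$ and $G$ is a finite abelian group, $kG$ is semisimple, so $\HH^*(kG, {}_{\hat{\phi}}kG)$ is concentrated in cohomological degree~$0$. In degree~$0$ one has $\HH^0(kG,{}_{\hat{\phi}}kG) = \{ x \in kG : gx = \hat{\phi}(g)\, xg \text{ for all } g\}$, and because $kG$ is commutative this is the eigenspace of $kG$ under the $\widehat{G}$-action in the character $\hat{\phi}$; for $G$ abelian one checks this picks out exactly the grading component indexed appropriately. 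The upshot is that only the cohomological-degree-$0$ part of the $S$-factor survives, and summing over $G$ collapses the $\bigoplus_{a\in\widehat{G},\,b\in G}$ in Theorem~\ref{thm:G-alg} down to $\bigoplus_{g\in G} \HH^*(S(V),S(V)_{\hat{g}})^{\widehat{G}}$, where the superscript $\widehat{G}$ records the invariants cut out by matching the $A$-grading index $a$ with the surviving $S$-factor. This yields the first claimed isomorphism.

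For the second isomorphism, $\HH^*(S(V),S(V)_{\hat{g}})^G \cong \HH^*(S(V),S(V)\rtimes G)^G$, I would reassemble the direct sum over $g \in G$ into a single coefficient bimodule. Since $S(V)\rtimes G = \bigoplus_{g} S(V)\otimes g$ as an $S(V)$-bimodule, and the action of $g$ from the right twists the $S(V)$-module structure exactly by $\hat{g}$, there is an $S(V)^{\ev}$-bimodule isomorphism $S(V)\rtimes G \cong \bigoplus_{g} S(V)_{\hat{g}}$. Taking $\HH^*(S(V),-)$ commutes with this finite direct sum, and then taking $G$-invariants (the $G$-action permuting and twisting the summands appropriately) identifies the two expressions. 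I would then remark that all these identifications are compatible with cup products and brackets by Theorems~\ref{cup-prod-thm} and~\ref{thm:G-alg}, so the isomorphisms are of Gerstenhaber algebras. The $G$-invariance appearing on the right is precisely the diagonal condition $a = \hat{g}$ that Theorem~\ref{thm:G-alg} imposes when combining the two factors.

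I expect the main obstacle to be bookkeeping the group actions and the invariants correctly: disentangling the three distinct roles played by $G$ and $\widehat{G}$ (the grading group $B=G$ on $kG$, the grading group $A=\widehat{G}$ on $S(V)$, and the honest $G$-action on $S(V)$ inducing the eigenspace decomposition) and verifying that the diagonal-grading condition $(a,b)$ in the main theorem translates exactly into taking $G$-invariants. Confirming that the surviving degree-$0$ Hochschild cohomology of $kG$ matches the character grading, so that the double sum genuinely collapses to the single sum over $g$ with invariants, is the crux; once that is pinned down, the remaining steps are formal consequences of the main theorem and the semisimplicity of $kG$.
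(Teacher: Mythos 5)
Your overall route is the same as the paper's: apply the main theorem with $R=S(V)$ graded by $A=\widehat{G}$, $S=kG$ graded by $B=G$ and $t(\phi,g)=\phi(g)$, use semisimplicity of $kG$ to concentrate the second factor in cohomological degree $0$, and reassemble $\bigoplus_{g} S(V)_{\hat{g}}\cong S(V)\rtimes G$ as $S(V)$-bimodules to get the second isomorphism. Your point about the finiteness hypotheses is in fact a genuine improvement on the paper's write-up: the components $S(V)^\phi$ are infinite dimensional, and refining to the $\Z\oplus\widehat{G}$-grading (with the bicharacter trivial on the $\Z$-factor) restores the hypotheses of Theorem~\ref{vect-space-theorem} without changing the algebra or any of the twists.

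However, there is a genuine error at exactly the step you identify as the crux. You claim that $\HH^0(kG,{}_{\hat{\phi}}kG)$ is an eigenspace that ``picks out exactly the grading component indexed appropriately,'' i.e.\ a nonzero piece for each $\phi$. This is false. The twisted left action is $h\cdot x=\hat{\phi}(h)x=\phi(h)hx$, so $x\in\HH^0(kG,{}_{\hat{\phi}}kG)$ means $\phi(h)hx=xh$ for all $h\in G$; since $kG$ is commutative this reads $(\phi(h)-1)hx=0$, and invertibility of $h$ forces $x=0$ unless $\phi=1$, in which case $\HH^0=kG$. It is precisely this vanishing for $\phi\neq 1$ (the statement the paper records as $\HH^*(kG,{}_{\phi}kG)=0$) that forces the $\widehat{G}$-degree of the $S(V)$-factor to be the \emph{trivial} character, and the trivial-character component of $\HH^*(S(V),S(V)_{\hat{g}})$ is, by definition of the eigenspace grading, the space of $G$-invariants (so the superscript in your collapsed sum should be $G$, not $\widehat{G}$). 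Relatedly, your closing assertion that the invariance ``is precisely the diagonal condition $a=\hat{g}$ that Theorem~\ref{thm:G-alg} imposes'' is wrong: the main theorem imposes no diagonal condition at all --- it sums over \emph{all} pairs $(a,b)$ --- and the collapse comes solely from the vanishing just described. Indeed, if your eigenspace claim were taken literally, the double sum would collapse to $\bigoplus_{\phi,g}\HH^*(S(V),S(V)_{\hat{g}})^{\phi}\otimes k\cong\HH^*(S(V),S(V)\rtimes G)$ with no invariants, contradicting the corollary; as written, the invariants in your conclusion are asserted rather than derived. The fix is the one-line computation above, after which the rest of your argument --- the bimodule identification $S(V)_{\hat{g}}\cong S(V)\otimes g$, and compatibility with cup products and brackets via Theorems~\ref{cup-prod-thm} and~\ref{thm:G-alg}, where the twists $t(a',b)$ become trivial on the surviving $a=1$ component --- is sound and matches the paper.
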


\begin{proof}
Using our main theorem, 
\[
\HH^*(S(V)\rtimes G)\cong \bigoplus_{\phi,g} \HH^*(S(V),S(V)_{\hat{g}})^{\phi}\otimes \HH^*(kG,{}_{\phi}kG)^{g}.
\]
Since $kG$ is semisimple, $\HH^*(kG,kG) \cong kG$, and $\HH^*(kG,{}_{\phi}kG)=0$ if $\phi\neq 1$. From here the statement follows.
\end{proof}

 Shepler and the second author investigated the cup product structure of 
the Hochschild cohomology $\HH^*(S(V)\rtimes G)$ in \cite{SW}. Their description of this structure can be recovered by inspecting our proof above (but note that they work more generally with any finite group).
Similarly, Negron and the second author described Gerstenhaber brackets on
$\HH^*(S(V)\rtimes G)$ for any finite group $G$, and~\cite[Theorem~5.2.3]{NW2} can
be recovered, in case $G$ is abelian, from our Corollary~\ref{skewcor} above. 

On a related note, we remark that the algebra $\HH^*(S(V),S(V)\rtimes G)$ appearing above is the orbit Hochschild cohomology which we considered in Section \ref{sec:cup}.

\begin{remark}
More generally, one can consider an action of $G$ on any $k$-algebra $R$. 
If $G$ is abelian and $k$ has ``enough'' roots of unity, then the action can be diagonalized, and the skew group algebra $R\rtimes G$ can be realized as a bicharacter twisted tensor product, as it was for the case $R=S(V)$ above.  Therefore, we can use our decomposition theorem to compute the Hochschild cohomology of $R\rtimes G$.

More generally still, the same remarks apply when $G$ acts on an algebra $R$ and grades an algebra $S$. Then one can form a twisted tensor product algebra $R\rtimes S$
with twist determined by both the $G$-grading and the $G$-action; 
the product is given by $r \otimes s\cdot r'\otimes s' = r g(r')\otimes ss' $ when $s'\in S^g$. Under the same hypotheses as above one can replace this with a bicharacter twisted tensor product, and thereby compute its Hochschild cohomology.
\end{remark}

\end{document}